\documentclass[12pt,reqno]{amsart}
\usepackage{amsfonts}
\usepackage{indentfirst}
\usepackage{graphicx}
\usepackage{amssymb}
\usepackage{color,xcolor}
\usepackage{graphicx,epstopdf}
\usepackage{epsfig}
\usepackage{subfigure}
\usepackage{caption}
\usepackage{mathrsfs}
\usepackage{bbm}
\usepackage{chngpage}
\usepackage{cite}
\usepackage{multirow}
\usepackage{multicol}
\usepackage{dsfont}
\usepackage{bm}
\usepackage{amssymb,amsmath,amsthm,mathrsfs}%
\usepackage{exscale}
\usepackage{relsize}
\usepackage{amssymb}
\usepackage{hyperref}
\usepackage{url}
\setlength{\hoffset}{-0.6in}
\setlength{\voffset}{-0in}
\headheight 12pt
\headsep    16pt
\topskip    12pt
\footskip   16pt
\textwidth      16cm
\textheight     20cm
\topmargin      1cm

\setcounter{topnumber}{3}

\setcounter{bottomnumber}{2}

\setcounter{totalnumber}{4}

\setcounter{dbltopnumber}{3}

\floatsep 12pt plus 6pt minus 1pt
\textfloatsep 18pt plus 6pt minus 3pt
\intextsep 18pt plus 3pt minus 2pt
\footnotesep 6pt \skip\footins 12pt plus 6pt minus 4pt
\allowdisplaybreaks

\hypersetup{hypertex=green,
            colorlinks=true,
            linkcolor=green,
            anchorcolor=green,
            citecolor=red}


\theoremstyle{definition}
\newtheorem{theorem}{Theorem}[section]
\newtheorem{definition}{Definition}[section]

\newtheorem{remark}{Remark}[section]
\newtheorem{lemma}{Lemma}[section]


\numberwithin{equation}{section}%
\numberwithin{table}{section}%
\numberwithin{figure}{section}

\def\3bar{{|\hspace{-.02in}|\hspace{-.02in}|}}
\def\td{\text{div}}
\def\tc{\text{curl}}
\def\d{\text{d}}


\begin{document}
\title[]{Curl-curl conforming elements on tetrahedra}
\author{Qian Zhang}
\email{go9563@wayne.edu}
\address{Department of Mathematics, Wayne State University, Detroit, MI 48202, USA. }

\author{Zhimin Zhang}
\email{ zmzhang@csrc.ac.cn;  zzhang@math.wayne.edu}
\address{Beijing Computational Science Research Center, Beijing 100193, China; and Department of Mathematics, Wayne State University, Detroit, MI 48202, USA.}
\thanks{The research of this work is supported in part by the National Natural Science Foundation of China grants NSFC 11871092 and NASF U1930402.}

\keywords
{$H^2$(curl)-conforming, finite elements, tetrahedral mesh, quad-curl problems, interpolation errors, convergence analysis.}

\subjclass[2000]{65N30 \and 35Q60 \and 65N15 \and 35B45}
\date{May 31, 2020}
\begin{abstract} 
In \cite{curlcurl-conforming2D}, we proposed a family of $H(\text{curl}^2)$-conforming elements on both a triangle and a rectangle. The elements providesa brand new method to solve the quad-curl problem in 2 dimensions. In this paper, we turn our focus to 3 dimensions and construct $H(\text{curl}^2)$-conforming finite elements on tetrahedra. The newly proposed elements have been proved to have the optimal   interpolation error estimate. Having the tetrahedral elements, we can solve the  quad-curl problem in any Lipschitz domain by the conforming finite element method. We also provide several numerical examples of using our elements to solve the quad-curl problem. The results of the numerical experiments show the correctness of our elements.
\end{abstract}	
\maketitle
\section{Introduction}
The quad-curl problem  are involved in  various practical problems, such as inverse electromagnetic scattering theory \cite{Cakoni2017A,Monk2012Finite, Sun2016A} or magnetohydrodynamics \cite{Zheng2011A}.
As its name implies, this problem involves a fourth-order curl operator which makes it much more challenging to solve than the lower-order electromagnetic problem \cite{Monk2003,Peter1992A,Teixeira2008Time,Jin1993Finite,Daveau2009A,Li2013Time,Monk1994Superconvergence}.
The regularity of this problem was studied by Nicaise \cite{Nicaise2016}, Zhang \cite{Zhang2018M2NA}, and Chen et al. \cite{CQXDG}. As for the numerical methods,
Zheng et al. developed a nonconforming finite element method for this problem in \cite{Zheng2011A}. This method has low computational cost since it has small number of degrees of freedom (DOFs), but it bears the disadvantage of low accuracy. Based on N\'ed\'elec elements, a discontinuous Galerkin method and a weak Galerkin method were presented in \cite{Qingguo2012A} and \cite{quadcurlWG}, respectively. In addition,  error estimates for discontinous Galerkin methods based on a relatively low regularity assumption of the exact solution are proposed in \cite{CCXDG,CQXDG}.
Another approach to deal with the quad-curl operator is to introduce an auxiliary variable and reduce the original problem to a second-order system \cite{Sun2016A}. Zhang proposed a different mixed scheme \cite{Zhang2018M2NA}, which relaxes the regularity requirement in theoretical analysis.

However, the most natural way to solve this problem is the conforming finite element method. In \cite{curlcurl-conforming2D}, the authors and another collaborator constructed curl-curl-conforming or $H(\text{curl}^2)$-conforming elements in 2 dimensions (2D) to solve the quad-curl problem. In three dimensions (3D), the numerical solution derived by the existing $H^{2}$-conforming (or $C^{1}$-conforming) elements ($\bm u\in \bm L^2$ and $\nabla\times\bm u\in \bm H^1$) \cite{zhang2009family} converges to an $H^2$ projection of the exact solution.
The distance between such a projection and the exact solution may be a positive constant since $C^{\infty}_0$ may not be dense in $H(\tc^2)\cap H(\td^0)$ under a specific norm. Indeed, the treatment of boundary conditions is also an issue when using $H^2$-conforming elements to solve the quad-curl problem.
Also, Neilan constructed a family of $\bm H^1(\tc)$-conforming elements ($\bm u\in \bm H^1$ and $\nabla\times\bm u\in \bm H^1$) in \cite{neilan2015discrete} (see \cite{falk2013stokes} for the 2D case).
The family of  elements can also lead to conforming approximations of the quad-curl problem. However, in this paper, we derive a conforming finite element space for $H(\tc^2;\Omega)$ ($\bm u\in \bm L^2$ and $\nabla\times\bm u\in \bm H^1$) where the function regularity is weaker than the space $\bm H^1(\tc)$. Such types of elements, to the best of the authors' knowledge,  are not available in the literature. Due to the large kernel space and the natural divergence-free property of the curl operator $\nabla\times$, the construction of $H(\text{curl}^2)$-conforming elements is more difficult than 2D case.

Our paper starts by describing the tetrahedral curl-curl-conforming finite elements. The unisolvence and conformity of our $H(\text{curl}^2)$-conforming finite elements can be verified by a rigorous mathematical analysis. Moreover, our new elements have been proved to possess  good interpolation properties. Although  the involvement of normal derivatives to edges render the DOFs on a general element failing to relate to those on the reference element, we constructed intermediate elements whose DOFs can be related to those on the reference element and are close to our elements.
In this way, we prove the optimal error estimate of the finite element interpolation.
    In our construction, the number of the degrees of freedom for the lowest-order element is 315. Because of the large number of DOFs, it's hard the compute the Lagrange-type basis functions by the traditional method. Hence we employ the method proposed in \cite{Dom2008Algorithm} to obtain the basis functions on a general element.
	
The rest of the paper is organized as follows. In section 2 we list some function spaces and notations. Section 3 is the technical part, where we construct the $H(\text{curl}^2)$- conforming finite elements on a tetrahedron. In section 4 we give the error estimate for the interpolation. In section 5 we use our newly proposed elements to solve the quad-curl problem and give some numerical results to verify the correctness of our method. Finally, some concluding remarks and possible future works are given in section 6. We present how to implement the finite elements in Appendix and provide the code for it.  

\section{Preliminaries}
Let $\Omega\in\mathbb{R}^3$ be a simply-connected Lipschitz domain. We adopt standard notations for Sobolev spaces such as $W^{m,p}(D)$ or $W^{m,p}_0(D)$ on a simply-connected sub-domain $D\subset\Omega$ equipped with the norm $\left\|\cdot\right\|_{m,p,D}$ and the semi-norm $\left|\cdot\right|_{m,p,D}$. If $p=2$, the space $W^{m,2}(D)$ is  exactly the space $H^{m}(D)$ with the norm $\left\|\cdot\right\|_{m,D}$. If $m=0$,  the space $W^{0,p} (D)$ coincides with $ L^p(D)$. When $D=\Omega$, we drop the subscript $D$ for ease of notation. We use  $\bm W^{m,p}(D)$, $\bm H^{m}(D)$,   and ${\bm L}^p(D)$ to denote the vector-valued Sobolev spaces $\left(W^{m,p}(D)\right)^3$, $\left(H^{m}(D)\right)^3$, and $\left(L^p(D)\right)^3$.

Let ${\bm u}=(u_1, u_2,u_3)^T$ and ${\bm w}=(w_1, w_2, w_3)^T$, where the superscript $T$ denotes the transpose,
then ${\bm u} \times {\bm w} = (u_2w_3-w_2u_3, w_1u_3-u_1w_3, u_1w_2-w_1u_2)^T$ and 
$\nabla \times {\bm u} = (\partial_{x_2}u_{3}- \partial_{x_3}u_{2}, \partial_{x_3}u_{1} -  \partial_{x_1}u_{3}, \partial_{x_1}u_{2} - \partial_{x_2}u_{1} )^T$.
For convenience, here and hereinafter we abbreviate the partial differential operators $\frac{\partial }{\partial x_i}$
to $\partial_{ x_i}$.
We denote $(\nabla\times)^2\bm u=\nabla\times\nabla\times\bm u$.

For $s=1,2$, we define
\begin{align*}
	H(\text{curl}^s;D)&:=\{\bm u \in {\bm L}^2(D):\; (\nabla \times)^j \bm u \in \bm L^2(D), \; j=1,s\}
\end{align*}
with scalar products and norms are defined by
	\[(\bm u,\bm v)_{H(\tc^s;D)}=(\bm u,\bm v)+\sum_{j=1}^s((\nabla\times)^j\bm u,(\nabla\times)^j \bm v)\]
	and
	\[\left\|\bm u\right\|_{H(\tc^s;D)}=\sqrt{(\bm u,\bm u)_{H(\tc^s;D)}}.\]
The spaces $H_0(\text{curl}^s;D)(s=1,\;2)$ with vanishing boundary conditions are defined as 
\begin{align*}
	&H_0(\text{curl};D):=\{\bm u \in H(\text{curl};D):\;{\bm n}\times\bm u=0\; \text{on}\ \partial D\},\\
	&H_0(\text{curl}^2;D):=\{\bm u \in H(\text{curl}^2;D):\;{\bm n}\times\bm u=0\; \text{and}\; \nabla\times \bm u=0\;\; \text{on}\ \partial D\}.
\end{align*}

For a subdomain $D$, a face $f$, or an edge $e$, we use $P_k$ and $\widetilde P_k$ to represent the space of polynomials on them with degree of no more than $k$ and the space of homogeneous polynomials, respectively.
Denote $\bm P_k=\left(P_k(D)\right)^3$ and $\widetilde{\bm P}_k=\big(\widetilde{P}_k(D)\big)^3$. 
We also denote
\begin{align*}
&\mathcal R_k=\bm P_{k-1}\oplus \mathcal S_k \text{\ with\ }\mathcal S_k=\{{\bm p}\in \widetilde{\bm P}_k\big| \ \bm x\cdot \bm p=0\},
\end{align*}
whose dimension is
\begin{align*}
&\dim{\mathcal R_k}=\frac{k(k+2)(k+3)}{2}.
\end{align*}
Note that, for the space $\bm P_k$, we have the following decomposition \cite{da2018lowest}
\begin{align}
&\bm P_k=\nabla P_{k+1} \oplus \bm x\times \bm P_{k-1},\label{Pkdecomp1}\\
&\bm P_k=\nabla\times \mathcal R_{k+1} \oplus  \bm x P_{k-1}\label{Pkdecomp2}.
\end{align}
The dimension of $ \bm x\times \bm P_{k-1}$ is $\dim{\bm P_k}-\dim{P_{k+1}}+1$ and the dimension of $\nabla\times \mathcal R_{k+1}$ is $\dim{\bm P_k}-\dim{P_{k-1}}$.

We adopt the following curl preserving Piola mapping \cite{piolamapping} to relate a function $\bm u$ on a general element $K$ to a function $\hat{\bm u}$ on the reference element $\hat K$ (the tetrahedron with vertices (0,0,0), (1,0,0), (0,1,0), and (0,0,1)):
\begin{align}
\bm u \circ F_K = B_K^{-T} \hat{\bm u},\label{mapping-u}
\end{align}
where $B_K$ is a 3-by-3 matrix from the affine mapping
\begin{align}\label{mapping-domain}
F_K(\hat{\bm x})= B_K\hat{\bm x}+ \bm b_K.
\end{align}
By a simple computation, we have
\begin{align}
(\nabla\times\bm u) \circ F_K &= \frac{B_K}{\det(B_K)} \hat{\nabla}\times\hat{ \bm u},\label{mapping-curlu}
\end{align}
and 
\begin{align}
 \bm n\circ F_K&= \frac{B_K^{-T} \hat {\bm n}}{\left\|B_K^{-T} \hat {\bm n}\right\|}\label{n},\\
\bm \tau\circ F_K&= \frac{B_K \hat {\bm \tau}}{\left\|B_K \hat {\bm \tau}\right\|}\label{tau},
\end{align}
for the unit normal vector $\bm n$ and the unit tangential vector $\bm{\tau}$ to $\partial K$.
%

\section{The Finite Elements on a Tetrahedron}

In this section, we will construct a family of finite elements, built on a tetrahedron, which is conforming in the space $H(\tc^2)$. To this end, we first introduce the following lemma which tells us the continuity conditions the finite elements should satisfy.

\begin{lemma}\label{rqmt1}
	Let $K_1$ and $K_2$ be two non-overlapping Lipschitz domains having a common face $\Lambda$ such that $\overline{{K}_1}\cap\overline{{K}_2} = \Lambda$. Assume that ${\bm u}_1 \in H(\text{curl}^2;{K}_1)$,
	${\bm u}_2 \in H(\text{curl}^2;{K}_2)$, and $\bm u \in \bm L^2({K}_1 \cup {K}_2 \cup \Lambda)$ is defined by
	\begin{equation*}
		\displaystyle{\bm u}=
		\begin{cases}
			&\bm u_1,\quad  \text{in}\ {K}_1,\\[0.2cm]
			&\bm u_2,\quad  \text{in}\ {K}_2.
		\end{cases}
	\end{equation*}
	Then $\bm u_1 \times \bm n_1 = -\bm u_2 \times \bm n_2$\ and $\nabla\times \bm u_1 \times \bm n_1 =-\nabla\times \bm u_2\times \bm n_2$ on $\Lambda$ implies that $\bm u \in H(\text{curl}^2;\text{K}_1 \cup \text{K}_2 \cup \Lambda)$, where $\bm n_i$ ($i=1,2$) is the unit outward normal vector to $\partial K_i$ and note that  $\bm n_1 = - \bm n_2$.
\end{lemma}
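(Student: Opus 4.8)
The plan is to verify directly, from the distributional definitions of curl and iterated curl, the two membership conditions contained in the definition of $H(\tc^2;K_1\cup K_2\cup\Lambda)$. Writing $\Omega^*:=\mathrm{int}(\overline{K_1}\cup\overline{K_2})$, I must exhibit two $\bm L^2(\Omega^*)$ fields that serve as the weak curl $\nabla\times\bm u$ and the weak double curl $(\nabla\times)^2\bm u$ on the whole patch. The obvious candidates are the piecewise-defined fields $\bm w$ and $\bm z$ with $\bm w|_{K_i}=\nabla\times\bm u_i$ and $\bm z|_{K_i}=(\nabla\times)^2\bm u_i$; both lie in $\bm L^2$ on each piece, hence in $\bm L^2(\Omega^*)$, precisely because $\bm u_i\in H(\tc^2;K_i)$. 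It then remains to establish the global distributional identities $\nabla\times\bm u=\bm w$ and $\nabla\times\bm w=\bm z$, the second of which yields $(\nabla\times)^2\bm u=\bm z\in\bm L^2(\Omega^*)$ and finishes the argument.

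For the first identity I would test against an arbitrary $\bm\phi\in\bm C_0^\infty(\Omega^*)$, split $\int_{\Omega^*}\bm u\cdot(\nabla\times\bm\phi)$ into the integrals over $K_1$ and $K_2$, and integrate by parts on each subdomain via the Green formula for the curl, itself a consequence of the identity $\nabla\cdot(\bm a\times\bm b)=(\nabla\times\bm a)\cdot\bm b-\bm a\cdot(\nabla\times\bm b)$ together with the divergence theorem. Summing the two pieces produces $\int_{\Omega^*}\bm w\cdot\bm\phi$ plus surface terms on $\partial K_1$ and $\partial K_2$. Since $\bm\phi$ is compactly supported in $\Omega^*$, every portion of $\partial K_i$ other than the shared face $\Lambda$ drops out, and the two surviving contributions combine, through the scalar triple product, into $\int_\Lambda(\bm u_1\times\bm n_1+\bm u_2\times\bm n_2)\cdot\bm\phi\,\d s$. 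This vanishes by the first hypothesis $\bm u_1\times\bm n_1=-\bm u_2\times\bm n_2$, leaving $\int_{\Omega^*}\bm u\cdot(\nabla\times\bm\phi)=\int_{\Omega^*}\bm w\cdot\bm\phi$, i.e. $\nabla\times\bm u=\bm w\in\bm L^2(\Omega^*)$.

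The second identity follows by repeating this argument verbatim with $\bm w$ in the role of $\bm u$. Here I use that $\bm w|_{K_i}=\nabla\times\bm u_i$ has weak curl $(\nabla\times)^2\bm u_i=\bm z|_{K_i}\in\bm L^2(K_i)$, so the same integration by parts gives $\int_{\Omega^*}\bm w\cdot(\nabla\times\bm\phi)=\int_{\Omega^*}\bm z\cdot\bm\phi$ up to the face term $\int_\Lambda\big((\nabla\times\bm u_1)\times\bm n_1+(\nabla\times\bm u_2)\times\bm n_2\big)\cdot\bm\phi\,\d s$. The second hypothesis $\nabla\times\bm u_1\times\bm n_1=-\nabla\times\bm u_2\times\bm n_2$ annihilates this term, whence $\nabla\times\bm w=\bm z$ in the distributional sense, and therefore $(\nabla\times)^2\bm u=\bm z\in\bm L^2(\Omega^*)$.

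The main obstacle is analytic rather than algebraic: at the regularity $\bm u_i\in H(\tc;K_i)$ the tangential trace $\bm u_i\times\bm n_i$ is not a pointwise restriction but an element of a trace space such as $H^{-1/2}(\mathrm{div}_\Lambda;\partial K_i)$, and the Green formula invoked above is really a duality pairing between this trace and the tangential trace of $\bm\phi$ in the corresponding $\bm H^{1/2}$ space. Making the surface integrals and the cancellations rigorous therefore requires the trace theory and integration-by-parts formula for $H(\tc)$ fields, together with the density of smooth fields, and the two hypotheses must be read as equalities of these traces in the appropriate dual space. Once this framework is in place, the two cancellations are exactly the mechanism that promotes the piecewise curls to genuine global weak curls, and the conclusion follows.
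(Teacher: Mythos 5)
Your proposal is correct and follows essentially the same route as the paper, which simply defers to the 2D analogue (Lemma 3.1 of the cited work): the standard distributional argument of testing against $\bm\phi\in\bm C_0^\infty$, integrating by parts on each subdomain, and letting the two tangential-trace hypotheses cancel the interface terms so that the piecewise curls become global weak curls. Your closing remark that the surface integrals must really be read as $H^{-1/2}$-type duality pairings is exactly the technical point that makes this rigorous at $H(\tc)$ regularity, and is the right caveat to attach.
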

\begin{proof}
	The proof is similar with that of Lemma 3.1 in \cite{curlcurl-conforming2D}.
\end{proof}


From Lemma \ref{rqmt1}, we know that the elements we will construct should satisfy the following continuity conditions:
\begin{itemize}
	\item $\bm u_1 \times \bm n_1 = -\bm u_2 \times \bm n_2$.
	\item $\nabla \times \bm u_1\times \bm n_1=-\nabla \times \bm u_2\times \bm n_2.$
		\item $\nabla \times \bm u_1\cdot \bm n_1=\nabla \cdot (\bm u_1\times \bm n_1)= -\nabla \cdot (\bm u_2\times \bm n_2)=-\nabla \times \bm u_2\cdot \bm n_2.$
	\end{itemize}
\begin{figure}
	\includegraphics[scale=0.5]{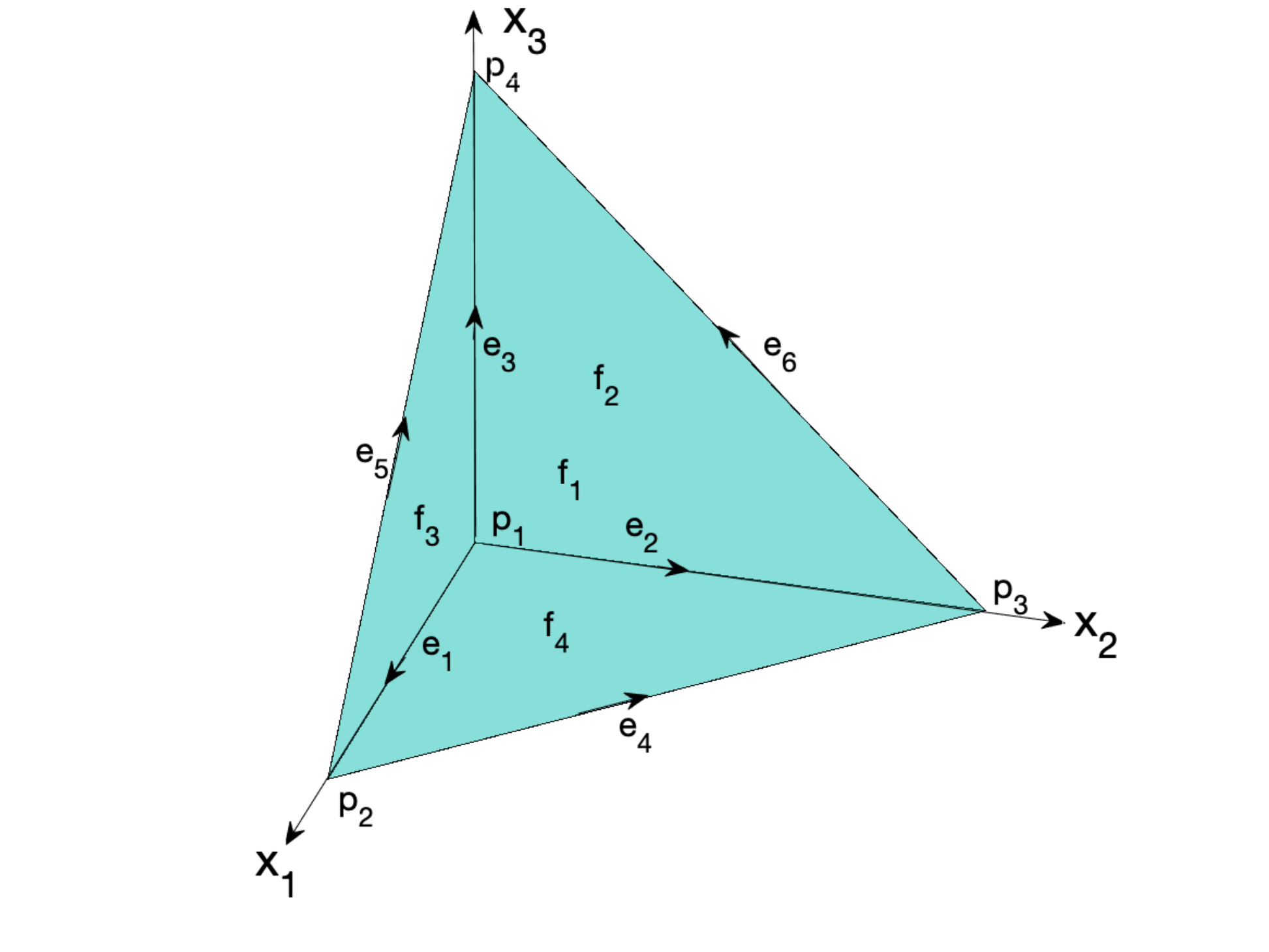}
	\caption{A reference tetrahedron}\label{reftetra}
\end{figure}
The last two conditions imply $\nabla\times\bm u_1$ and $\nabla\times\bm u_2$ are continuous across the face.
Based on above continuity conditions, we give the definition of the elements as follows.
\begin{definition}[Curl-curl-conforming elements or ${H}(\mathrm{curl}^2)$-conforming elements on a tetrahedron] \label{tetra-DOF}
For any integer $k \geq 7$, the ${H}(\mathrm{curl}^2)$-conforming elements are defined by the triple:
	\begin{equation*}
	\begin{split}
	& {K}\;\text{is a tetrahedron}\ (see \ Figure\  \ref{reftetra})\\
	&P_{ {K}} = \mathcal{R}_k,\\
	&\Sigma_{ {K}} = \bm M_{ {\bm p}}(  {\bm u}) \cup \bm M_{ {\bm e}}(  {\bm u}) \cup \bm M_{ {f}}(  {\bm u}) \cup \bm M_{ {K}}(  {\bm u}),
	\end{split}
	\end{equation*}
where $\Sigma_{ {K}}$ is the set of the DOFs defined by the following.
	\begin{itemize}
		\item Vertex DOFs: $M_{  {p}}(  {\bm u})$  
		\begin{itemize}
		\item $ {\nabla}\times {\bm u}(  {\bm p}_i), \ i=1,2,3,4;$
        \item $D( {\nabla}\times {\bm u})(  {\bm p}_i), \ i=1,2,3,4,$  except $\partial_{x_3} ( {\nabla}\times {\bm u})_{3 }(  {\bm p}_i)$;        \item $D^2( {\nabla}\times {\bm u})(  {\bm p}_i), \ i=1,2,3,4,$ except $\partial^2_{x_1   x_1}( {\nabla}\times {\bm u})_1(  {\bm p}_i)$, $\partial^2_{x_2   x_2}( {\nabla}\times {\bm u})_{2}(  {\bm p}_i)$, $\partial^2_{x_3   x_3}( {\nabla}\times {\bm u})_{3}(  {\bm p}_i)$.
       \end{itemize}	
   Here, we use $D\bm v$ and $D^2\bm v$ to represent all the first-order and second-order derivatives of $\bm v$.
       \item Edge DOFs:  $M_{  e}(  {\bm u})$  
       \begin{itemize}
       	\item $\int_{  {\bm e}_i} {\bm u} \cdot {\bm{\tau}_i}  q\mathrm{d} s, \ \forall  q\in P_{k-1}(  {e}_i),\ i=1,2,\cdots,6;$
       	\item $( {\nabla}\times {\bm u})(  {\bm e}^0_{ij}), \ i=1,2,\cdots,6,\ j=1,2,\cdots,k-6;$
       	\item  $\nabla( {\nabla}\times {\bm u}\cdot \bm v_i)(  {\bm e}^1_{ij})\cdot {\bm n}_i , \ i=1,2,\cdots,6,\ j=1,2,\cdots,k-5,\ \bm v_i= {\bm \tau}_i,  {\bm n}_i, \text{or} \  {\bm m}_i;$
       	\item  $\nabla( {\nabla}\times {\bm u}\cdot \bm v_i)(  {\bm e}^1_{ij})\cdot {\bm m}_i , \ i=1,2,\cdots,6,\ j=1,2,\cdots,k-5,\ \bm v_i= {\bm \tau}_i\ \text{or}\  {\bm n}_i,$
       \end{itemize}	
   	where $ {\bm \tau}_i,  {\bm n}_i,  {\bm m}_i$ are the unit tangential vector and two unit normal vectors to the edge $ \bm e_i$, and ${\bm e}^0_{ij}$ (or ${\bm e}^1_{ij}$) are $k-6$ (or $k-5$) distinct nodes on edge $\bm {e}_i$.
   \item Face DOFs:  $M_{f}({\bm u})$   
   \begin{itemize}
   \item $\frac{1}{area(f_i)}\int_{  f_i} {\bm u} \cdot  {\bm q}\mathrm{d} S, \ \forall     {\bm q}\circ F_K=B_K\hat{\bm q},\   \hat{\bm q}\in P_{k-3}( \hat f_i)[\hat{\bm x}-(\hat{\bm x}\cdot\hat{\bm \nu}_i)\hat{\bm \nu}_i]|_{\hat f_i},\ i=1,2,3,4;$   	 	
   \item $\frac{1}{area(f_i)}\int_{  f_i} {\nabla}\times {\bm u} \cdot  {\bm q}\mathrm{d} S, \ \forall  {\bm q}\in P_{k-7}(f_i)\bm{t}_{i,1}\oplus P_{k-7}(f_i)\bm{t}_{i,2},\ i=1,2,3,4;$
   	\item $\int_{  f_i} {\nabla}\times {\bm u} \cdot  {\bm n}_i  { q} \mathrm{d} S, \ \forall  {q}\in P_{k-7}(  f_i)/\mathbb{R},\ i=1,2,3,4,$
   \end{itemize}
where $B_K$ is from the affine mapping $F_K$ which maps $\hat K$ to $K$, $\hat{\bm \nu}_{i}$ is the unit outward normal vector to the reference face ${\hat f}_i$ associated with ${f}_i$, and $\bm t_{i,j},\ j=1,2$ are the two noncollinear vectors on the face ${f}_i$.
   	 \item Interior DOFs:  $M_{K}({\bm u})$   
   	\begin{itemize}
\item $\int_{  K} {\bm u} \cdot  {\bm q}\mathrm{d}  V, \ \forall {\bm q}\circ F_K= \det(B_K)^{-1}B_K\hat{\bm q},\ \hat{\bm q}\in P_{k-4}(  \hat K) \hat   {\bm x}$.
\item $\int_{  K} \nabla\times{\bm u} \cdot  {\bm q}\mathrm{d}  V, \ \forall {\bm q}\circ F_K= \det(B_K)B_K^{-T}\hat{\bm q},\ \hat{\bm q}\in \hat{\bm x}\times \bm P_{k-7}(\hat K)$.
   \end{itemize}	
\end{itemize}	
\end{definition}

\begin{remark}	
We exclude the DOFs $\partial_{x_3} ( {\nabla}\times {\bm u})_{3 }(  {\bm p}_i)$, $\partial^2_{x_1   x_1}( {\nabla}\times {\bm u})_1(  {\bm p}_i)$, $\partial^2_{x_2   x_2}( {\nabla}\times {\bm u})_{2}(  {\bm p}_i)$, $\partial^2_{x_3   x_3}( {\nabla}\times {\bm u})_{3}(  {\bm p}_i)$, $\nabla( {\nabla}\times {\bm u}\cdot \bm m_i)(  {\bm e}^1_{ij})\cdot {\bm m}_i$, and  $\int_{  f_i} {\nabla}\times {\bm u} \cdot  {\bm n}_i  \mathrm{d} S$ because of the divergence-free property of $\nabla\times\bm u$.
\end{remark}
\begin{remark}	
The DOFs for $\nabla\times\bm u$ are the same as the elements in \cite{neilan2015discrete}.
\end{remark}

Now we have $ (26\times 4)$ node DOFs,  $(6k+18(k-6)+30(k-5))$ edge DOFs,  $(2(k-2)(k-1)+6(k-6)(k-5)-4)$ face DOFs,  and  $((k-3)(k-2)(k-1)/6+(k-5)(k-4)(k-3)/2-(k-4)(k-3)(k-2)/6+1)$ interior DOFs, and therefore
\begin{align*}
\dim(P_{ {K}})&=26\times 4+6k+18(k-6)+30(k-5)+2(k-2)(k-1)+6(k-6)(k-5)-4\\
&+k^3/2 - 11k^2/2 + 21k - 26=\frac{k(k+2)(k+3)}{2}=\dim \mathcal{R}_k.
\end{align*}
Since $k\geq 7$, the minimum number of DOFs  is 315.

\begin{theorem}
	The DOFs defined in Definition \ref{tetra-DOF} are well-defined for any $\bm u\in \bm H^{1/2+\delta}(K)$  with $\delta>0$ and $\nabla\times\bm u\in \bm C^2(\bar K)$.
\end{theorem}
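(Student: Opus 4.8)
The plan is to show that every functional in $\Sigma_K$ extends to a bounded (hence well-defined) linear functional on the space $X(K):=\{\bm u\in\bm H^{1/2+\delta}(K):\nabla\times\bm u\in\bm C^2(\bar K)\}$, so that each degree of freedom returns a finite value. I would split $\Sigma_K$ according to what it measures. All functionals that act on $\nabla\times\bm u$ — the vertex values and the first and second derivatives at $\bm p_i$, the edge nodal values $(\nabla\times\bm u)(\bm e^0_{ij})$ and the edge derivative functionals at $\bm e^1_{ij}$, the three families of face moments of $\nabla\times\bm u$, and the interior moment $\int_K\nabla\times\bm u\cdot\bm q\,\d V$ — are immediately finite: since $\nabla\times\bm u\in\bm C^2(\bar K)$, its derivatives up to order two are continuous on $\bar K$, so every pointwise evaluation is bounded by $\|\nabla\times\bm u\|_{\bm C^2(\bar K)}$, and every surface or volume integral of a bounded continuous field against a fixed polynomial test field is finite. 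These cost no analysis.

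Next I would treat the functionals that integrate $\bm u$ itself over the tetrahedron or its faces. The interior moments $\int_K\bm u\cdot\bm q\,\d V$ are controlled by $\|\bm u\|_{\bm L^2(K)}$ because $\bm q$ is a fixed polynomial field. For the face moments $\frac{1}{\mathrm{area}(f_i)}\int_{f_i}\bm u\cdot\bm q\,\d S$, I would apply the Sobolev trace theorem: for $\bm u\in\bm H^{1/2+\delta}(K)$ the trace on each face $f_i$ lies in $\bm H^{\delta}(f_i)\hookrightarrow\bm L^2(f_i)$, so the integral against the bounded field $\bm q$ is finite and depends continuously on $\|\bm u\|_{\bm H^{1/2+\delta}(K)}$.

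The only delicate functionals are the edge tangential moments $\int_{\bm e_i}\bm u\cdot\bm\tau_i\,q\,\d s$ with $q\in P_{k-1}(e_i)$, and this is where the whole content of the theorem lies: an edge has codimension two, and the raw trace of $\bm H^{1/2+\delta}(K)$ on a one-dimensional set is not defined when $\delta\le 1/2$, so one must use the regularity of $\nabla\times\bm u$ to compensate. Fix a face $f\supset\bm e_i$. By the trace theorem the tangential part $\bm u_t$ of $\bm u$ on $f$ lies in $\bm H^{\delta}(f)$, while by the standard tangential-trace identity its scalar surface curl equals the normal component $(\nabla\times\bm u)\cdot\bm n$ of the continuous field $\nabla\times\bm u$ on $f$, hence $\operatorname{curl}_f\bm u_t\in\bm C^0(\bar f)\hookrightarrow L^p(f)$ for every finite $p$. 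On the simply connected triangular face $f$ I would use the surface Helmholtz decomposition $\bm u_t=\nabla_f\psi+\operatorname{curl}_f\chi$; then $\pm\Delta_f\chi=\operatorname{curl}_f\bm u_t\in L^p(f)$, and elliptic regularity on the triangle $f$ gives $\chi\in W^{2,p}(f)\hookrightarrow C^1(\bar f)$ once $p>2$, so $\operatorname{curl}_f\chi$ is continuous up to $\partial f$ and contributes a bounded, continuous integrand on $\bm e_i$. Because $\operatorname{curl}_f\chi$ is smooth, $\nabla_f\psi=\bm u_t-\operatorname{curl}_f\chi\in\bm H^{\delta}(f)$, whence $\psi\in H^{1+\delta}(f)$ and its edge trace satisfies $\psi|_{e_i}\in H^{1/2+\delta}(e_i)\hookrightarrow C^0(e_i)$ precisely because $\delta>0$. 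The gradient contribution $\nabla_f\psi\cdot\bm\tau_i=\tfrac{d}{ds}(\psi|_{e_i})$ is then paired with $q$ by integrating by parts along $\bm e_i$, moving the derivative onto the polynomial $q$; the boundary terms at the two endpoints of $\bm e_i$ are legitimate thanks to the continuity of $\psi|_{e_i}$. Collecting the pieces expresses $\int_{\bm e_i}\bm u\cdot\bm\tau_i\,q\,\d s$ as a finite sum of terms each bounded by $\|\bm u\|_{\bm H^{1/2+\delta}(K)}$ and $\|\nabla\times\bm u\|_{\bm C^2(\bar K)}$.

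I expect this last group to be the entire difficulty; the mechanism — a sub-$H^1$ fractional face trace rescued by the $L^p$-control of the surface curl, with $\delta>0$ furnishing the crucial one-dimensional embedding $H^{1/2+\delta}(e_i)\hookrightarrow C^0(e_i)$ — is exactly the classical minimal-regularity obstruction for defining the canonical interpolant of N\'ed\'elec-type edge elements. A more economical write-up would quote the two-dimensional counterpart in \cite{curlcurl-conforming2D} together with the standard edge-trace lemma for $\bm H(\tc)$, but the self-contained argument above keeps the hypotheses transparent and makes clear why $\bm u\in\bm H^{1/2+\delta}$ with $\delta>0$ and a continuous $\nabla\times\bm u$ suffice. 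Since all constants are local to $K$, boundedness of each functional yields that every degree of freedom is well-defined on $X(K)$, which is the assertion.
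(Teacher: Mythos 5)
Your proposal is correct and takes essentially the same route as the paper: you split the DOFs exactly as the paper does (curl-related functionals handled by the $\bm C^2$ assumption, face and interior moments by Cauchy--Schwarz and the trace theorem), and for the delicate edge moments $\int_{\bm e_i}\bm u\cdot\bm\tau_i q\,\d s$ you reproduce in full the argument that the paper simply cites as Lemma 5.38 of \cite{Monk2003} (face trace, surface Helmholtz decomposition, elliptic regularity for the surface potential, and the one-dimensional embedding $H^{1/2+\delta}(e_i)\hookrightarrow C^0(e_i)$). The only small imprecision is your claim that $\chi\in W^{2,p}(f)$ for every $p>2$: on a triangular face the Dirichlet problem has $W^{2,p}$ regularity only for $p$ below a threshold determined by the largest interior angle, but since every angle of a triangle is strictly less than $\pi$ that threshold exceeds $2$, so choosing $p>2$ sufficiently close to $2$ (which is all the argument needs) makes your proof go through.
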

\begin{proof}
	Since $\nabla\times\bm u \in \bm C^2(\bar K)$, the vertex DOFs  and edge DOFs involving $\nabla\times\bm u$ are well-defined.  It follows from the Cauchy-Schwarz inequality that the face DOFs and interior DOFs are well-defined since $\bm u, \nabla\times\bm u\in \bm H^{1/2+\delta}(K)$ and  $\bm u|_{\partial K},(\nabla\times\bm u)|_{\partial K}\in \bm H^{\delta}(\partial K)$. By the argument in the proof of Lemma 5.38 in \cite{Monk2003}, the DOF $\int_{\bm  e_i}\bm u\cdot \bm\tau_i q\d s$ is well-defined if $\bm u\in \bm H^{1/2+\delta}(K)$ and $\nabla\times\bm u\in \bm L^p(K)$ with $p>2$. We has completed the proof since $\nabla\times\bm u\in \bm C^2(\bar K)\subset \bm L^p(K).$
\end{proof}
\begin{figure}
	\includegraphics[scale=0.35]{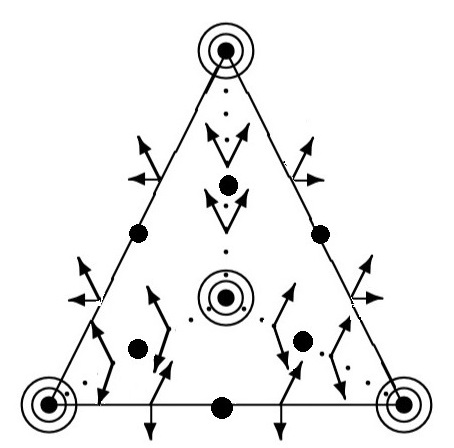}
	\includegraphics[scale=0.5]{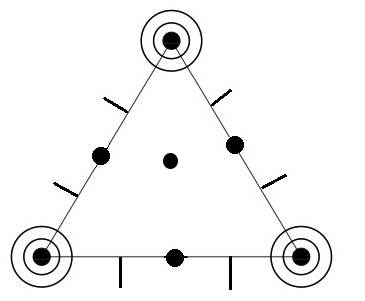}
	\caption{The vertex and edge DOFs of a 3D Argyris element and a 2D Argyris element}\label{argyris}
\end{figure}
\begin{theorem}
	The finite elements given by Definition \ref{tetra-DOF} are unisolvent and conforming in $H(\mathrm{curl}^2)$.
\end{theorem}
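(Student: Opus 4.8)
The plan is to prove the two assertions in turn, unisolvence first and conformity second (the latter via Lemma \ref{rqmt1}). Since the dimension computation preceding the theorem already gives $\#\Sigma_K=\dim\mathcal R_k$, unisolvence reduces to the implication that if $\bm u\in\mathcal R_k$ and every functional in $\Sigma_K$ vanishes, then $\bm u=0$. I would split $\Sigma_K$ into two groups: the \emph{curl functionals}, namely all DOFs acting on $\nabla\times\bm u$ (the vertex DOFs, the nodal edge DOFs, the tangential and normal-component face moments of $\nabla\times\bm u$, and the interior moments of $\nabla\times\bm u$), and the \emph{potential functionals}, namely the tangential edge moments $\int_{\bm e_i}\bm u\cdot\bm\tau_i\,q\,\mathrm ds$, the tangential face moments of $\bm u$, and the interior radial moments of $\bm u$. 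The strategy is to kill $\nabla\times\bm u$ using the first group and then kill the remaining gradient field using the second.

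For the first step I would use the observation recorded in the Remark after Definition \ref{tetra-DOF}: the curl functionals are exactly the degrees of freedom of Neilan's $\bm H^1(\mathrm{curl})$ element applied to $\bm w:=\nabla\times\bm u$. Because $\bm u\in\mathcal R_k$, the field $\bm w$ lies in $\nabla\times\mathcal R_k$, which consists of divergence-free polynomials of degree at most $k-1$; this is precisely why the listed DOFs may omit one first-order and three second-order vertex derivatives, the $\bm m_i$--$\bm m_i$ edge derivative, and the mean normal-component face moment without losing unisolvence. Invoking the unisolvence of that element on the divergence-free image space, vanishing of all curl functionals forces $\nabla\times\bm u=0$. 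By the exactness of the polynomial de Rham complex (the kernel of $\nabla\times$ on the first-kind N\'ed\'elec space $\mathcal R_k$ is $\nabla P_k$; cf.\ \eqref{Pkdecomp1}), we conclude $\bm u=\nabla p$ for some $p\in P_k$.

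For the second step I would peel the potential off layer by layer. On each edge $\bm e_i$ the tangential trace of $\bm u=\nabla p$ equals $\partial_{\tau_i}p\in P_{k-1}(\bm e_i)$, so taking $q=\partial_{\tau_i}p$ in the vanishing moment $\int_{\bm e_i}(\partial_{\tau_i}p)q\,\mathrm ds=0$ shows $p$ is constant along every edge, hence constant on the whole $1$-skeleton; subtracting this constant we may assume $p\equiv0$ there. Next, on each face $f_i$ the trace $p|_{f_i}$ vanishes on $\partial f_i$, so it factors as a cubic face bubble times an element of $P_{k-3}(f_i)$; integrating the vanishing tangential face moments by parts (the boundary term drops since $p|_{\partial f_i}=0$) and using the Piola-mapped tangential test fields forces this factor to vanish, so $p=0$ on each face. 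Finally $p$ vanishes on all of $\partial K$, hence $p=b_K\,r$ with $b_K$ the quartic bubble and $r\in P_{k-4}$; integrating the vanishing interior radial moments of $\bm u=\nabla p$ by parts (again the boundary term drops) forces $r=0$. Thus $p$ is constant, $\bm u=\nabla p=0$, and unisolvence follows.

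For conformity I would verify the two tangential-continuity conditions extracted from Lemma \ref{rqmt1} across a common face $\Lambda$. The continuity of $\nabla\times\bm u$ is inherited directly from the Argyris/$\bm H^1(\mathrm{curl})$ structure of the curl functionals: the vertex, edge-nodal, and face DOFs for $\nabla\times\bm u$ shared by the two elements determine $\nabla\times\bm u|_\Lambda$ uniquely and in a $C^0$ fashion, yielding both $\nabla\times\bm u_1\times\bm n_1=-\nabla\times\bm u_2\times\bm n_2$ and the normal matching $\nabla\times\bm u_1\cdot\bm n_1=-\nabla\times\bm u_2\cdot\bm n_2$. The tangential trace $\bm u\times\bm n$ is controlled by the tangential edge moments and the tangential face moments of $\bm u$, which are N\'ed\'elec-type functionals depending only on data geometrically attached to $\Lambda$; a dimension count (as in the $2$D construction of \cite{curlcurl-conforming2D}) shows they determine $\bm u\times\bm n|_\Lambda$ completely, so the two sides agree. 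Assembling these matches and invoking Lemma \ref{rqmt1} gives $H(\mathrm{curl}^2)$-conformity. I expect the main obstacle to be the first unisolvence step, namely rigorously establishing that the curl functionals are unisolvent on the divergence-free image $\nabla\times\mathcal R_k$, since this is exactly where the excluded functionals must be reconciled with the divergence constraint; the tangential-trace determination for $\bm u$, requiring the precise choice of Piola-mapped test fields, is the secondary difficulty.
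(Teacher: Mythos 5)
Your step 2 (killing the gradient part $\bm u=\nabla p$ edge-by-face-by-interior) and your overall two-part structure are sound, and step 2 is essentially the paper's own argument. The fatal problem is step 1: the claim that the \emph{curl functionals alone} are unisolvent on $\nabla\times\mathcal R_k$ is false, and a dimension count shows it. Take $k=7$: the curl functionals number $26\cdot 4+18(k-6)+30(k-5)+8+3=104+18+60+8+3=193$, whereas $\nabla\times\mathcal R_7$ is the full divergence-free subspace of $\bm P_6$ and has dimension $\dim\mathcal R_7-\dim\nabla P_7=315-119=196$. So the curl functionals have at least a three-dimensional kernel on the space where $\nabla\times\bm u$ lives. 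The missing three dimensions are exactly the face means $\int_{f_i}\nabla\times\bm u\cdot\bm \nu_i\,\mathrm{d}S$ (four means minus the single relation $\sum_{i}\int_{f_i}\nabla\times\bm u\cdot\bm \nu_i\,\mathrm{d}S=0$ supplied by the divergence theorem): unlike the omitted vertex and edge derivatives, which are recovered \emph{pointwise} from $\nabla\cdot(\nabla\times\bm u)=0$, the individual face means are not consequences of the divergence-free property, so "invoking unisolvence on the divergence-free image space" cannot work.

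This is precisely why the paper does not, and cannot, decouple the two families of DOFs the way you propose. In the paper's proof the missing face means are recovered from the \emph{potential} functionals via Stokes' theorem, $\int_{f_i}\nabla\times\bm u\cdot\bm\nu_i\,\mathrm{d}S=\int_{\partial f_i}\bm u\cdot\bm\tau\,\mathrm{d}s=0$, using the vanishing tangential edge moments of $\bm u$; likewise the interior moments of $\nabla\times\bm u$ against gradient fields $\nabla P_{k-5}$ (also absent from your curl list, cf.\ the decomposition \eqref{Pkdecomp1}) are recovered by integration by parts, $(\nabla\times\bm u,\nabla q)_K=\langle\bm u\times\bm n,\nabla q\rangle_{\partial K}=0$, which requires already knowing $\bm u\times\bm n=0$ on $\partial K$ --- a fact that itself depends on the potential DOFs. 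Hence the paper's order of operations: first the trace result (all DOFs attached to a face vanish $\Rightarrow$ $\bm u\times\bm\nu=0$ and $\nabla\times\bm u=0$ on that face), which intertwines both families, and only then the interior argument (bubble factorization of $\nabla\times\bm u$, extraction of the factors $x_i$ from the divergence constraint, and the interior moments). The same over-attribution occurs in your conformity sketch, where the mean of $\nabla\times\bm u\cdot\bm n$ on $\Lambda$ is again not controlled by curl functionals; there it is repairable, since the edge moments of $\bm u$ are shared by the two adjacent elements, but in the unisolvence step the decoupling is a genuine gap. You correctly flagged this point as the main obstacle, but it is not a technicality to be checked --- the statement you need is false as formulated, and repairing it forces you back to the paper's coupled argument.
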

\begin{proof}
	(\romannumeral 1). To prove the $H(\text{curl}^2)$ conformity, it suffices to prove $ {\bm u} \times  {\bm \nu}_i = 0$ 
	and $ {\nabla}\times  {\bm u}=0$ on a face when all DOFs associated with this face vanish.  
	Without loss of generality, we only consider the face $  f_4$ in Figure \ref{reftetra}. 
	By the first kind of edge DOFs, $$ {\bm u}\cdot  {\bm \tau}_i =0 \text{\ on each edge} \  \bm{ e}_i.$$ 
	Furthermore, the node DOFs and edge DOFs involving $ {\nabla}\times  {\bm u}$  perform like the 3D Argyris element (if restricted on a face, it will be  the 2D Argyris element, see Fig \ref{argyris}),  we can get 
	\[ {\nabla}\times  {\bm u}=0 \  \text{and} \ D( {\nabla}\times  {\bm u})=0 \ \text{on each edge}.\]
	Restricted on the face $f_4$, we have 
	\[ {\nabla}\times  {\bm u}\cdot {\bm \nu}_4=0 \  \text{and} \ D( {\nabla}\times  {\bm u}\cdot {\bm \nu}_4)=0 \ \text{on each edge of this face}.\]
	Hence we can rewrite it as,
	\[ {\nabla}\times  {\bm u}\cdot {\bm \nu}_4=  (\lambda_1^2  \lambda_2^2 \lambda_3^2)|_{f_4}\varphi, \ \varphi\in P_{k-7}(  f_4),\]
	where $\lambda_i, i=1,2,3$ are the barycentric coordinates such that $\lambda_i(\bm p_j)=\delta_{ij}$ for $j=1,2,3,4$. 
	By the integration by parts and $ {\bm u}\cdot  {\bm \tau}_i |_{  \bm e_i}=0$,
	\begin{equation*}
	\int_{ {f}_4} {\nabla}\times  {\bm u}\cdot \bm \nu_4 C\d S=\int_{\partial  {f}_4} { C} {\bm u}\cdot  {\bm{\tau}_i}\d s+	\int_{ {f}_4}{\bm u}\times\bm \nu_4\cdot\nabla C\d S=0,
	\end{equation*}
	which together with the third kind of face DOFs, we get
	\[ {\nabla}\times  {\bm u}\cdot \bm \nu_4=0.\]
	Hence there exists a $\phi\in P_k(f_4)$ s.t., restricted on the face $f_4$,
	\begin{align*}
	 {\bm u}\cdot {\bm t_i}=  {\nabla}\phi\cdot {\bm t_i}, i=1,2, 
	 	 \end{align*}
    where $\bm t_i,i=1,2$ are two mutually orthogonal unit vectors on the face $f_4$. 
    In fact, since $0=({\nabla}\times  {\bm u}\cdot \bm \nu_4)|_{f_4}=\nabla_f\cdot(\bm u\times \bm \nu_4)|_{f_4}$, we have $\bm u\times \bm \nu_4|_{f_4}=\nabla_f\times \phi=(\nabla\phi\cdot \bm t_2)\bm t_1+(-\nabla\phi\cdot \bm t_1)\bm t_2$ which implies ${\bm u}\cdot {\bm t_i}=  {\nabla}\phi\cdot {\bm t_i}$ since $\bm u\times \bm \nu_4|_{f_4}=-(\bm u\cdot \bm t_1)|_{f_4}\bm t_2+(\bm u\cdot \bm t_2)|_{f_4}\bm t_1$.

    According to $ {\bm u}\cdot  {\bm \tau}_i |_{  \bm e_i}=0$, we have $  \nabla{\phi}\cdot {\bm {\tau}}_i=0$ on $\partial   f_4$, which implies that ${\phi}$ can be chosen as
	\[{\phi}= (\lambda_1  \lambda_2 \lambda_3)|_{f_4}{\psi} \text{ for some }\ {\psi} \in P_{k-3}(  f_4).\]
	Applying integration by parts, we obtain, for a $\bm q$ such that $\bm q\cdot\bm \nu_4=0$,
	\[  ({\bm u},\bm q)_{f_4}=(\phi,\nabla\cdot\bm q)_{f_4}=((\lambda_1  \lambda_2 \lambda_3)|_{f_4}{\psi},\nabla\cdot\bm q)_{f_4}.\]
	By the first kind of face DOFs, we can pick a $\bm q$ s.t. $\nabla\cdot\bm q=\psi$ and $q\cdot\bm \nu_4=0$, and hence, arrive at $\psi=0$, i.e. $\bm u\times \bm \nu_4=0.$
	Recall that $D( {\nabla}\times  {\bm u}\times   {\bm \nu_4})=0$ and $ {\nabla}\times  {\bm u}\times   {\bm \nu_4}=0$ on each edge of $  f_4$, i.e., 
	\begin{align*}
	( {\nabla}\times  {\bm u})\cdot \bm t_j =    (\lambda_1^2  \lambda_2^2 \lambda_3^2)|_{f_4}\varphi_j,\ \varphi_j\in P_{k-7}(  f_4),j=1,2.
	\end{align*}
	The second kind of face DOFs yields $ {\nabla}\times  {\bm u}\times   \bm \nu_4=0$ which together with $ {\nabla}\times  {\bm u}\cdot   \bm \nu_4=0$ leads to $ {\nabla}\times  {\bm u}=0$ on the face $  f_4$.
	
	(\romannumeral 2).   Now, we consider the unisolvence.
	We only need to prove that vanishing all DOFs for $ \hat{\bm u }\in P_{\hat {K}}$ yields $ \hat{\bm u}=0$ on  the reference element $\hat K$. For ease of notation, we will omit the hat notation. By virtue of the fact that $ {\nabla}\times  {\bm u} = 0$ on $\partial   K$, we can rewrite $ {\nabla}\times  {\bm u}$ as:
	\[ {\nabla}\times  {\bm u} =  x_1   x_2   x_3 (1-  x_1 -  x_2-  x_3 ){\bm \Phi} \text{ with }\ {\bm  \Phi}=(\bm  \Phi_1,\bm  \Phi_2,\bm  \Phi_3)^T\in \bm P_{k-5}(  K),\]
	and hence
	\begin{align*}
	\partial_{x_1}( {\nabla}\times  {\bm u} )_{1}	=   x_2  x_3 [(1-2  x _1-  x_2-  x_3 )\bm \Phi_1+(1-  x_1 -  x_2-  x_3 ) x_1 \partial_{ {x_1}}\bm \Phi_1],\\
	\partial_{x_2}( {\nabla}\times  {\bm u} )_{2}	=   x_1  x_3 [(1-2  x_2 -  x_1-  x_3 )\bm \Phi_2+(1-  x_1 -  x_2-  x_3 ) x_2 \partial_{ {x_2}}\bm \Phi_2],\\
	\partial_{x_3}( {\nabla}\times  {\bm u} )_{3}	=   x_1  x_2 [(1-2  x_3 -  x_1-  x_2 )\bm \Phi_3+(1-  x_1 -  x_2-  x_3 ) x_3 \partial_{ {x_3}}\bm \Phi_3].
	\end{align*}
	When  $  x_1=0$, $\partial_{x_2}( {\nabla}\times  {\bm u} )_{2}$+$\partial_{ x_3}( {\nabla}\times  {\bm u} )_{3}=0$ which leads to
	$\partial_{ {x_1}}( {\nabla}\times  {\bm u} )_{1}	=0$ because $\nabla\cdot\nabla\times\bm u=\partial_{ {x_1}}( {\nabla}\times  {\bm u} )_{1}+\partial_{x_2}( {\nabla}\times  {\bm u} )_{2}+\partial_{ x_3}( {\nabla}\times  {\bm u} )_{3}=0$. It implies  $\bm \Phi_1$ has a factor $x_1$.  
	Similarly, $\bm \Phi_2$  has a factor $  x_2$ and $\bm \Phi_3$ has a factor $x_3$.  
	Then 
	\[ {\nabla}\times  {\bm u} =  x_1   x_2   x_3 (1-  x_1 -  x_2-  x_3 )[  x_1 \tilde{\bm \Phi}_1,   x_2\tilde{\bm \Phi}_2,   x_3 \tilde{\bm \Phi}_3]^T\]
with $\tilde{\bm  \Phi}=[\tilde{\bm  \Phi}_1,\tilde{\bm  \Phi}_2,\tilde{\bm  \Phi}_3]\in \bm P_{k-6}( K),i=1,2,3.$ 
	Due to the second kind of  interior vanishing DOFs and the fact that
	\[ ({\nabla}\times  {\bm u},\nabla q )_K=\langle   {\bm u}\times   {\bm n}, \nabla q  \rangle_{\partial   K}=0, \ \forall q \in P_{k-5}(K),\]
	we have
	\begin{align}
	( {\nabla}\times  {\bm u}, \bm q )_{  K}=0, \ \forall \bm q \in \bm P_{k-6}(K),\label{curlu-interior}
	\end{align}
	here we used the decomposition \eqref{Pkdecomp1}. By setting $\bm q=\tilde{\bm  \Phi}$ in \eqref{curlu-interior}, we get
	\[ {\nabla}\times  {\bm u}=0\text{ in }  K.\] 
	Therefore, we can choose a $\Psi=   x_1   x_2   x_3 (1-  x_1 -  x_2-  x_3 )\tilde{\Psi}$ with $\tilde{\Psi}\in P_{k-4}(  K)$ such that
	\[ {\bm u}= {\nabla} \Psi.\]
	Again, by applying  integration by parts, 
	\[(  {\bm u}, {\bm  q} )_{  K}=( {\nabla} \Psi , {\bm  q} )_{  K}= (\Psi ,  {\nabla} \cdot {\bm  q} )_{  K}=(  x_1   x_2   x_3 (1-  x_1 -  x_2-  x_3 )\tilde{\Psi},  {\nabla} \cdot {\bm  q} )_{  K}.\]
	Using the first kind of interior DOFs and choosing a ${\bm  q}$ s.t. $ {\nabla} \cdot {\bm  q} = \tilde{\Psi}$, we get $\tilde{\Psi}=0$ and hence $ {\bm u}=0$.
\end{proof}

\section{Error Estimate of Finite Element Interpolation}
Provided $\bm u \in \bm H^{1/2+\delta}(K)$ with $\delta >0$ and $ \nabla \times \bm u \in \bm C^2(\bar K)$,  we can
define an $H(\tc^2)$ interpolation operator on $K$ denoted as $\Pi_K$ by
\begin{align}
	 M_p(\bm u-\Pi_K\bm u)=0,\  M_e(\bm u-\Pi_K\bm u)=0,\  M_f(\bm u-\Pi_K\bm u)=0,\ \text{and}\ M_K(\bm u-\Pi_K\bm u)=0,
\end{align}\label{def-interp}
where $M_p,\  M_e$, $M_f$, and $M_K$ are the sets of DOFs in Definition \ref{tetra-DOF}.

The DOFs of the  finite elements  defined in Definition \ref{tetra-DOF} involve  normal derivatives to edges, we can not relate the interpolation $\Pi_K$ on a general element $K$ 
to $\Pi_{\hat  K}$ on the reference element $\hat K$ by the mapping \eqref{mapping-u}. 
To estimate the interpolation error, we introduce finite elements slightly different from our elements, but the corresponding interpolation on $K$ and that on $\hat K$ can be related via the mapping \eqref{mapping-u}.

\begin{definition} \label{tetra-affine}
	For any integer $k \geq 7$, ${H}(\mathrm{curl})$-conforming elements are defined by the triple:
	\begin{equation*}
	\begin{split}
	& {K}\;\text{is a tetrahedron}\ (see \ Figure\  \ref{reftetra})\\
	&P_{ {K}} = \mathcal{R}_k,\\
	&\tilde \Sigma_{ {K}} =  \tilde M_{ { p}}(  {\bm u}) \cup \tilde M_{ { e}}(  {\bm u}) \cup \tilde M_{ {f}}(  {\bm u}) \cup \tilde M_{ {K}}(  {\bm u}),
	\end{split}
	\end{equation*}
	where $\tilde\Sigma_{ {K}}$ are the DOFs obtained by the following slight changes to $\Sigma_{ {K}}$ in Definition \ref{tetra-DOF} with the other DOFs staying the same.
	\begin{itemize}
			\item Edge DOFs:  $\tilde M_{  e}(  {\bm u})$  
		\begin{itemize}
		\item  $\big(\nabla( {\nabla}\times {\bm u}\cdot (\bm v^i_m\times\bm v^i_n))\cdot {\bm v}^i_l \big)(  {\bm e}^1_{ij}), \ i=1,2,\cdots,6,\ j=1,2,\cdots,k-5$, 
			 $\{l,[m,n]\}=\{2,[1,2]\}, \ \{2,[1,3]\}, \ \{2,[2,3]\},\  \{3,[1,2]\},\text{ or } \{3,[1,3]\},$ 
		\end{itemize}	
		where ${\bm v}^i_n\ (n=1,2,3)$ are the three edges  intersected at the initial point of $\bm  e_i$ with ${\bm v}^i_1=\bm  e_i$ and  $  {\bm e}^1_{ij}\ ( j=1,2,\cdots,k-5)$ are defined as in Definition \ref{tetra-DOF}.
		\end{itemize}	
\end{definition}
\begin{remark}
	With these small changes, the elements are conforming in $H(\tc)$ rather than	$H(\tc^2)$.
\end{remark}

Provided $\bm u \in \bm H^{1/2+\delta}(K)$  with $\delta >0$ and $ \nabla \times \bm u \in \bm C^2(\bar K)$,  we can
define an $H(\tc)$ interpolation operator on $K$ denoted as $\Lambda_K$ by
\begin{align}\label{def-interp-aff}
\tilde M_p(\bm u-\Lambda_K\bm u)=0,\ \tilde M_e(\bm u-\Lambda_K\bm u)=0,\ \tilde M_f(\bm u-\Lambda_K\bm u)=0,\ \text{and}\ \tilde M_K(\bm u-\Lambda_K\bm u)=0,
\end{align}
where $\tilde M_p,\ \tilde M_e$, $\tilde M_f$, and $\tilde M_K$ are the sets of DOFs in Definition \ref{tetra-affine}.

\begin{lemma}\label{relation}
Assume that $\Lambda_K$ is well-defined. Then under the transformation \eqref{mapping-u}, we have
$\Lambda_K \bm u\circ F_K=B_K^{-T}\Lambda_{\hat K}\hat {\bm u}$.
\end{lemma}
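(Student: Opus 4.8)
The plan is to combine the unisolvence established in the preceding theorem with a transformation law showing that the modified degrees of freedom of Definition \ref{tetra-affine} behave well under the curl-preserving Piola mapping \eqref{mapping-u}. First I would define $\bm w$ on $K$ by $\bm w\circ F_K=B_K^{-T}\Lambda_{\hat K}\hat{\bm u}$. Since $\Lambda_{\hat K}\hat{\bm u}\in\mathcal R_k(\hat K)=P_{\hat K}$ and the mapping \eqref{mapping-u} carries $\mathcal R_k(\hat K)$ onto $\mathcal R_k(K)$, we have $\bm w\in P_K$. The goal is then to prove $\bm w=\Lambda_K\bm u$, and by unisolvence it suffices to verify $\tilde N(\bm w)=\tilde N(\bm u)$ for every $\tilde N\in\tilde\Sigma_K$, because $\Lambda_K\bm u$ is the unique element of $P_K$ sharing the degrees of freedom of $\bm u$.

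The heart of the argument is the following transformation law: to each $\tilde N\in\tilde\Sigma_K$ there corresponds (grouped per vertex, edge, face, and cell) a functional $\hat N\in\tilde\Sigma_{\hat K}$ such that, whenever $\bm v$ and $\hat{\bm v}$ are related by \eqref{mapping-u}, one has $\tilde N(\bm v)=c\,\hat N(\hat{\bm v})$ with a constant $c$ depending only on $B_K$ (for the vertex jets this is replaced by an invertible block relation). Granting this, I would apply it to the two pairs $(\bm w,\Lambda_{\hat K}\hat{\bm u})$ and $(\bm u,\hat{\bm u})$ and use that $\Lambda_{\hat K}\hat{\bm u}$ reproduces the reference degrees of freedom of $\hat{\bm u}$, obtaining $\tilde N(\bm w)=c\,\hat N(\Lambda_{\hat K}\hat{\bm u})=c\,\hat N(\hat{\bm u})=\tilde N(\bm u)$, which completes the proof.

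I would verify the transformation law group by group. The face and interior moments are designed so that their test functions are prescribed through $B_K\hat{\bm q}$, $\det(B_K)^{-1}B_K\hat{\bm q}$, and $\det(B_K)B_K^{-T}\hat{\bm q}$; combined with \eqref{mapping-u}, \eqref{mapping-curlu}, and the area and volume Jacobians, these make the integrals invariant up to a fixed power of $\det(B_K)$. The tangential edge moments $\int_{\bm e_i}\bm u\cdot\bm\tau_i\,q\,\d s$ and the normal-flux face moments $\int_{f_i}\nabla\times\bm u\cdot\bm n_i\,q\,\d S$ are the classical N\'ed\'elec functionals, whose affine equivalence follows from \eqref{tau} and \eqref{mapping-curlu}. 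The genuinely new point is the modified edge functional $\nabla(\nabla\times\bm u\cdot(\bm v^i_m\times\bm v^i_n))\cdot\bm v^i_l$: because the $\bm v^i_n$ are true edge vectors they push forward as $\bm v^i_n=B_K\hat{\bm v}^i_n$, and the identity $B_K\hat{\bm a}\times B_K\hat{\bm b}=\det(B_K)B_K^{-T}(\hat{\bm a}\times\hat{\bm b})$ together with \eqref{mapping-curlu} shows the scalar $\nabla\times\bm u\cdot(\bm v^i_m\times\bm v^i_n)$ is exactly the pullback of $\hat\nabla\times\hat{\bm u}\cdot(\hat{\bm v}^i_m\times\hat{\bm v}^i_n)$; the chain rule $\nabla(\cdot)\circ F_K=B_K^{-T}\hat\nabla(\cdot)$ then makes the final contraction against $\bm v^i_l$ invariant. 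This is precisely why the edge normals $\bm n_i,\bm m_i$ of Definition \ref{tetra-DOF}, which do not map to edge normals under an affine map, had to be replaced.

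I expect the vertex jets to be the main obstacle. Their functionals are the full Cartesian $2$-jet of $\nabla\times\bm u$ at $\bm p_i$ minus the components excluded in the Remark. Differentiating \eqref{mapping-curlu} and applying the chain rule shows the complete $2$-jet transforms by an invertible block-triangular matrix built from $B_K$; the delicate point is that the \emph{reduced} jet actually retained is still transformed invertibly. This holds because $\nabla\times\bm u$ and $\hat\nabla\times\hat{\bm u}$ are both divergence free, a property preserved by the curl-preserving map, so on each element the excluded derivatives $\partial_{x_3}(\nabla\times\bm u)_3$ and $\partial^2_{x_ix_i}(\nabla\times\bm u)_i$ are recovered from the retained ones via $\nabla\cdot(\nabla\times\bm u)=0$ and its first derivatives. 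Hence the block transformation restricts to an isomorphism of the retained vertex degrees of freedom. Once all groups are checked, the transformation law holds and the lemma follows.
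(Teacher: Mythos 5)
Your proposal is correct and takes essentially the same route as the paper: you verify that each pulled-back degree of freedom of Definition \ref{tetra-affine} is a linear combination of the reference ones---with the identical key computation for the modified edge functionals $\nabla(\nabla\times\bm u\cdot(\bm v^i_m\times\bm v^i_n))\cdot\bm v^i_l$ using $B_K\hat{\bm a}\times B_K\hat{\bm b}=\det(B_K)B_K^{-T}(\hat{\bm a}\times\hat{\bm b})$ and \eqref{mapping-curlu}---and then conclude by the uniqueness argument, which is exactly the content of the result the paper invokes as Proposition 3.4.7 of \cite{brenner-fe}. The only difference is one of detail: you spell out what the paper compresses into the phrase ``identical with or linear combinations of'' (in particular the recovery of the excluded vertex derivatives from the retained ones via $\nabla\cdot(\nabla\times\bm u)=0$) and you prove, rather than cite, the affine-equivalence principle.
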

\begin{proof}
With the slight changes, the new DOFs are identical with those in $\tilde \Sigma_{\hat K}$ under \eqref{mapping-u}
\[\nabla\big( {\nabla}\times {\bm u}\cdot (\bm v^i_m\times\bm v^i_n)\big)\cdot {\bm v}^i_l= B_K^{-T}\hat \nabla \big({B_K\hat{\nabla}\times \hat{\bm u}}\cdot B_K^{-T}(\hat{\bm v}^i_m\times\hat{\bm v}^i_n)\big)\cdot B_K\hat{\bm v}^i_l =\hat \nabla \big({\hat{\nabla}\times \hat{\bm u}}\cdot (\hat{\bm v}^i_m\times\hat{\bm v}^i_n)\big)\cdot \hat{\bm v}^i_l.\]
The remaining DOFs in $\tilde \Sigma_K$ are either identical with or linear combinations of those in $\tilde \Sigma_{\hat K}$. 
According to Proposition 3.4.7 in \cite{brenner-fe}, we complete the proof.
\end{proof}

To get the error estimate of the interpolation operator $\Lambda_K$, we introduce an interpolation  $I_K\bm w\in \bm P_{k-1}(k\geq 7)$ for $\bm w$ s.t.
\begin{itemize}
	\item Vertex DOFs: $i=1,2,3,4$ 
	\begin{itemize}
		\item $ (I_K\bm w)(  {\bm p}_i)=\bm w(  {\bm p}_i);$ 
		\item $D(I_K\bm w)(  {\bm p}_i)=D\bm w(  {\bm p}_i)$;
		\item $D^2(I_K\bm w)(  {\bm p}_i)=D^2\bm w(  {\bm p}_i)$.
	\end{itemize}	
	\item Edge DOFs:  $i=1,2,\cdots,6$
	\begin{itemize}
		\item $(I_K\bm w)(  {\bm e}^0_{ij})=\bm w({\bm e}^0_{ij}),\ j=1,2,\cdots,k-6;$
		\item  $\big(\nabla( I_K\bm w\cdot (\bm v^i_m\times\bm v^i_n))\cdot {\bm v}^i_l \big)(  {\bm e}^1_{ij})=\big(\nabla(\bm w\cdot (\bm v^i_m\times\bm v^i_n))\cdot {\bm v}^i_l \big)(  {\bm e}^1_{ij}),\ j=1,2,\cdots,k-5$, $\{l,[m,n]\}=\{2,[1,2]\}, \ \{2,[1,3]\}, \ \{2,[2,3]\}, \ \{3,[1,2]\},\ \{3,[1,3]\},\text{ or }\{3,[2,3]\}$,	\end{itemize}	
	where ${\bm v}^i_n\ (n=1,2,3)$, $ {\bm e}^0_{ij} \ ( j=1,2,\cdots,k-6)$, and $  {\bm e}^1_{ij}\ ( j=1,2,\cdots,k-5)$ are defined in Definition \ref{tetra-DOF}.
	\item Face DOFs:  $ i=1,2,3,4$
	\begin{itemize}
		\item $\frac{1}{area(f_i)}\int_{  f_i} I_K\bm w \cdot \bm{ q}\mathrm{d} S=\frac{1}{area(f_i)}\int_{  f_i} \bm w \cdot \bm{ q}\mathrm{d} S, \ \forall   {\bm q}\in P_{k-7}(f_i)\bm{t}_{i,1}\oplus P_{k-7}(f_i)\bm{t}_{i,2}$;
		\item $\int_{  f_i} I_K\bm w \cdot  {\bm n}_i  { q}\mathrm{d} S=\int_{  f_i} \bm w \cdot  {\bm n}_i  { q}\mathrm{d} S, \ \forall  {q}\in P_{k-7}(  f_i)$;
		\item $\det(B_K)\int_{f_i}\nabla\cdot I_K\bm wq\mathrm{d} S=\det(B_K)\int_{f_i}\nabla\cdot \bm wq\mathrm{d} S, \ \forall  {q}\in P_{k-5}(  f_i)$,
	\end{itemize}
	where ${\bm t}_{i,j}(j=1,2)$ is defined in Definition \ref{tetra-DOF}.	\item Interior DOFs: 
	\begin{itemize}
		\item $\int_{  K}I_K\bm w\cdot  {\bm q}\mathrm{d}  V=\int_{  K}\bm w\cdot  {\bm q}\mathrm{d}  V, \ \forall {\bm q}= \det(B_K)B_K^{-T}\hat{\bm q},\forall \hat{\bm q}\in \hat{\bm x}\times \bm P_{k-7}(\hat K)$;
		\item $\int_{  K}\nabla \cdot I_K\bm w { q}\mathrm{d}  V=\int_{  K}\nabla \cdot \bm w  { q}\mathrm{d}  V , \ \forall {q}= {\det(B_K)}^{-1}\hat { q },\ \forall \hat q \in P_{k-6}(\hat K)/\mathbb{R}$.
	\end{itemize}	
\end{itemize}
\begin{remark}
All the DOFs used to define $I_K$ are all the missing DOFs in Definition \ref{tetra-affine} because of $\nabla\cdot\nabla\times \bm u=0$ as well as those associated with $\nabla\times\bm u$.
\end{remark}

\begin{lemma}\label{err-affine-interp-curlu}
	If $\bm w\in \bm H^s(K)$ with $s>7/2$ and there exists a pair $\{m,q\}$ s.t.  $H^s(K) \hookrightarrow W^{m,q}(K)$, then we have the following error estimates for the interpolation $I_K$,
	\begin{align*}
	&\left\|\bm w-I_K\bm w\right\|_{m,q,K}\leq C|K|^{1/q-1/2}h_K^{s-m}\left\|\bm w\right\|_{s,K}.
	\end{align*}
\end{lemma}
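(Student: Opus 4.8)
The plan is to reduce the estimate on a general element $K$ to a fixed estimate on the reference element $\hat K$ by an affine-equivalence (scaling) argument, exactly in the spirit of the classical theory of Ciarlet and of Brenner--Scott that is already invoked in Lemma \ref{relation}. First I would establish that $I_K$ is affine-equivalent to $I_{\hat K}$ under the curl-type Piola transform \eqref{mapping-curlu}: writing $\bm w\circ F_K=\det(B_K)^{-1}B_K\hat{\bm w}$, I claim $(I_K\bm w)\circ F_K=\det(B_K)^{-1}B_K(I_{\hat K}\hat{\bm w})$. This is verified DOF-family by DOF-family, just as in Lemma \ref{relation}: the $B_K$ and $\det(B_K)$ weights built into the face and interior moment DOFs are designed precisely so that the change of variables turns them into the corresponding reference moments, while the vertex value, first-derivative, and second-derivative DOFs transform into one another through the invertible chain-rule maps (contraction with $B_K^{-1}$ and $B_K^{-1}\otimes B_K^{-1}$). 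The crucial point is that $I_K$ carries the \emph{complete} sets of $D$ and $D^2$ data at each vertex, with no components omitted, so these families are closed under the transform; this is why $I_K$, unlike the $H(\tc^2)$ element of Definition \ref{tetra-DOF}, is genuinely affine-equivalent.

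Second, I would prove the reference-element bound. Since $s>7/2$, the Sobolev embedding $\bm H^s(\hat K)\hookrightarrow \bm C^2(\hat K)$ holds in three dimensions, so every DOF of $I_{\hat K}$, including the second-order vertex derivatives, is a bounded linear functional on $\bm H^s(\hat K)$; hence $I_{\hat K}\colon \bm H^s(\hat K)\to \bm P_{k-1}(\hat K)$ is bounded, and it reproduces $\bm P_{k-1}$ because it is a projection onto its range. Combining boundedness with the hypothesised embedding $H^s(\hat K)\hookrightarrow W^{m,q}(\hat K)$ and the Bramble--Hilbert (Deny--Lions) lemma gives
\[
\|\hat{\bm w}-I_{\hat K}\hat{\bm w}\|_{m,q,\hat K}
=\|(\mathrm{Id}-I_{\hat K})(\hat{\bm w}-\hat{\bm p})\|_{m,q,\hat K}
\le C\inf_{\hat{\bm p}\in \bm P_{k-1}}\|\hat{\bm w}-\hat{\bm p}\|_{s,\hat K}
\le C\,|\hat{\bm w}|_{s,\hat K}.
\]

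Third comes the scaling back to $K$. Writing $(\bm w-I_K\bm w)\circ F_K=\det(B_K)^{-1}B_K(\hat{\bm w}-I_{\hat K}\hat{\bm w})$ and applying the change-of-variables formulas seminorm by seminorm, each $|\bm w-I_K\bm w|_{j,q,K}$ ($0\le j\le m$) is controlled by $|K|^{1/q-1}h_K^{1-j}$ times $|\hat{\bm w}-I_{\hat K}\hat{\bm w}|_{j,q,\hat K}$, using $\|B_K\|\le Ch_K$, $\|B_K^{-1}\|\le Ch_K^{-1}$, $|\det B_K|\sim|K|$ (shape regularity), the vector weight $\det(B_K)^{-1}B_K$, the factor $|\det B_K|^{1/q}$ from the $L^q$ measure, and $j$ factors of $\|B_K^{-1}\|$ from the derivatives. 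By the reference bound each $|\hat{\bm w}-I_{\hat K}\hat{\bm w}|_{j,q,\hat K}\le C|\hat{\bm w}|_{s,\hat K}$, and transporting $|\hat{\bm w}|_{s,\hat K}$ back to $\|\bm w\|_{s,K}$ (via $\hat{\bm w}=\det(B_K)B_K^{-1}(\bm w\circ F_K)$, the factor $|\det B_K|^{-1/2}$ from the $L^2$ measure, and $s$ factors of $\|B_K\|$) produces $|K|^{1/2}h_K^{s-1}$. Multiplying, the $j$-th seminorm is bounded by $C|K|^{1/q-1/2}h_K^{s-j}\|\bm w\|_{s,K}$; summing over $0\le j\le m$, the top-order term $j=m$ dominates for $h_K\le 1$ and gives exactly the claimed $|K|^{1/q-1/2}h_K^{s-m}\|\bm w\|_{s,K}$.

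I expect the main obstacle to be the first step: checking that each of the vertex, edge, face, and interior DOF families of $I_K$ is genuinely invariant, or closed up to an invertible linear change, under \eqref{mapping-curlu}, since the derivative DOFs mix among themselves and the moment DOFs carry compensating $B_K$ and $\det(B_K)$ weights. Once affine equivalence is in hand, the remaining ingredients, namely the embedding $\bm H^s\hookrightarrow\bm C^2$ forced by the threshold $s>7/2$ (which is exactly what makes the $D^2$ vertex functionals bounded), Bramble--Hilbert, and the power counting in the scaling, are routine.
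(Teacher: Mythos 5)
Your proposal is correct and is essentially the paper's own argument: the paper disposes of this lemma in one line by declaring it standard and citing Theorem 3.1.4 of Ciarlet, which is precisely the affine-equivalence (Piola-transform) plus Bramble--Hilbert plus scaling argument you spell out, with the correct power counting $|K|^{1/q-1}h_K^{1-m}\cdot|K|^{1/2}h_K^{s-1}=|K|^{1/q-1/2}h_K^{s-m}$. Your observation that $I_K$ carries the complete $D$ and $D^2$ vertex data (unlike the element of Definition \ref{tetra-DOF}) and weighted moment DOFs, and is therefore genuinely affine-equivalent, is exactly the reason the paper can invoke the standard theorem here while needing the intermediate element $\Lambda_K$ elsewhere.
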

\begin{proof}
The proof is standard, c.f. Theorem 3.1.4 in \cite{ciarlet2002finite}.
\end{proof}

\begin{lemma} \label{IK-LambdaK}	If $\nabla\times\bm u\in \bm H^s(K)$ with $s>7/2$ and there exists a pair $\{m,q\}$ s.t.  $H^s(K) \hookrightarrow W^{m,q}(K)$, then	\[\left\|I_K \nabla\times \bm u-\nabla\times\Lambda_K\bm u\right\|_{m,q,K}\leq C|K|^{1/q-1/2}h_K^{s-m}\left\|\nabla \times \bm  u\right\|_{s,K}.\]
\end{lemma}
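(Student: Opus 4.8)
The plan is to set $\bm w:=\nabla\times\bm u$ and split the quantity to be bounded by the triangle inequality,
\[
\|I_K\nabla\times\bm u-\nabla\times\Lambda_K\bm u\|_{m,q,K}\le \|\bm w-I_K\bm w\|_{m,q,K}+\|\bm w-\nabla\times\Lambda_K\bm u\|_{m,q,K}.
\]
The first term is exactly the interpolation error covered by Lemma \ref{err-affine-interp-curlu} applied to $\bm w=\nabla\times\bm u\in\bm H^s(K)$, so it is immediately of the asserted form $C|K|^{1/q-1/2}h_K^{s-m}\|\bm w\|_{s,K}$. Only the second term, the curl of the $H(\tc)$-interpolation error, needs further work.

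The key observation for that term is that $\nabla\times\Lambda_K\bm u$ depends on $\bm u$ \emph{only through} $\bm w=\nabla\times\bm u$. Indeed, $\nabla\times\Lambda_K\bm u\in\nabla\times\mathcal R_k$, and part (ii) of the unisolvence argument shows that the DOFs of Definition \ref{tetra-affine} acting on $\nabla\times\bm u$ (the vertex, edge and face curl-DOFs together with the second interior DOF \eqref{curlu-interior}) already force $\nabla\times\bm u=0$; hence these curl-DOFs are unisolvent on $\nabla\times\mathcal R_k$. Since $\Lambda_K$ matches every such DOF of $\bm u$, the field $\nabla\times\Lambda_K\bm u$ is precisely the element of $\nabla\times\mathcal R_k$ whose curl-DOFs coincide with those of $\bm w$; that is, it is the value $\Xi_K\bm w$ of a genuine interpolation operator $\Xi_K$ acting on $\bm w$ alone, and by construction $\Xi_K\bm v=\bm v$ for every $\bm v\in\nabla\times\mathcal R_k$.

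I would then make $\Xi_K$ affine-equivalent to its reference counterpart. Combining Lemma \ref{relation} with the curl identity \eqref{mapping-curlu} gives
\[
(\nabla\times\Lambda_K\bm u)\circ F_K=\frac{B_K}{\det(B_K)}\,\hat\nabla\times\Lambda_{\hat K}\hat{\bm u},
\qquad
\bm w\circ F_K=\frac{B_K}{\det(B_K)}\,\hat{\bm w},
\]
so that $\Xi_K$ commutes with the contravariant Piola transform $\bm v\mapsto\det(B_K)^{-1}B_K\hat{\bm v}$. Because $\Xi_{\hat K}$ reproduces $\nabla\times\mathcal R_k$, which contains the divergence-free polynomials of the degree relevant to order $s$ (available since $k\ge 7$), the standard interpolation-error machinery (Theorem 3.1.4 in \cite{ciarlet2002finite}, exactly as invoked for Lemma \ref{err-affine-interp-curlu}) applied on $\hat K$ and pushed forward through the Piola map yields $\|\bm w-\Xi_K\bm w\|_{m,q,K}\le C|K|^{1/q-1/2}h_K^{s-m}\|\bm w\|_{s,K}$. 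Adding the two contributions and recalling $\bm w=\nabla\times\bm u$ completes the estimate.

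I expect the main obstacle to be the second step: rigorously certifying that $\nabla\times\Lambda_K\bm u$ is a \emph{well-defined} interpolant of $\bm w$, i.e. that the curl-DOFs of Definition \ref{tetra-affine} are unisolvent on $\nabla\times\mathcal R_k$ so that the curl does not register the tangential edge-DOFs or the volume $\bm u$-DOFs, and that $\Xi_K$ reproduces enough divergence-free polynomials for the Bramble--Hilbert step to deliver the full power $h_K^{s-m}$. Tracking the precise Jacobian exponents $|K|^{1/q-1/2}$ through the contravariant Piola transform is routine but must be carried out carefully to reproduce the stated constant.
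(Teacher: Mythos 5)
Your architecture (triangle inequality, an intrinsic interpolation operator $\Xi_K$ acting on $\bm w=\nabla\times\bm u$, affine equivalence through the Piola map, Bramble--Hilbert) is genuinely different from the paper's proof, but its pivotal step has a real gap: the claim that the curl-DOFs of Definition \ref{tetra-affine} are unisolvent on $\nabla\times\mathcal R_k$ is false. Count them for $k=7$: $26\times4=104$ vertex DOFs, $18(k-6)+30(k-5)=78$ edge DOFs, $8$ face DOFs (the third face family is taken in $P_{k-7}(f_i)/\mathbb{R}$, hence empty here), and $3$ interior DOFs, i.e.\ $193$ functionals in total, whereas $\dim(\nabla\times\mathcal R_7)=\dim\mathcal R_7-\dim\nabla P_7=315-119=196$. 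The three missing functionals are precisely the face averages $\int_{f_i}(\nabla\times\bm u)\cdot\bm n\,\mathrm{d}S$ (four faces, one relation coming from $\nabla\cdot\nabla\times\bm u=0$). The paper's unisolvence proof does not show what you attribute to it: there, the constant mode of $\nabla\times\bm u\cdot\bm\nu$ on a face is killed by Stokes' theorem using the \emph{tangential edge DOFs of} $\bm u$, and the identity \eqref{curlu-interior} you invoke is itself derived from $\langle\bm u\times\bm n,\nabla q\rangle_{\partial K}=0$, i.e.\ from $\bm u$-type DOFs, not from curl-DOFs alone. So, as written, $\Xi_K$ is not known to be well defined, and the second term of your triangle-inequality split is unsupported.

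The factorization you want --- that $\nabla\times\Lambda_K\bm u$ depends on $\bm u$ only through $\nabla\times\bm u$ --- does happen to be true, but for a subtler reason you would have to prove: the only $\bm u$-dependent quantities entering $\nabla\times\Lambda_K\bm u$ are the circulations $\oint_{\partial f_i}\Lambda_K\bm u\cdot\bm\tau\,\mathrm{d}s=\oint_{\partial f_i}\bm u\cdot\bm\tau\,\mathrm{d}s$ (matched because $1\in P_{k-1}(e_i)$), and by Stokes these equal $\int_{f_i}(\nabla\times\bm u)\cdot\bm n\,\mathrm{d}S$, hence are functionals of $\bm w$ after all. Repairing your proof therefore requires (i) establishing this factorization, (ii) proving unisolvence of the augmented set (curl-DOFs plus face averages) on $\nabla\times\mathcal R_k$ so that $\Xi_K$ is well defined and bounded on divergence-free $\bm H^s$, $s>7/2$, and (iii) a Bramble--Hilbert argument using reproduction of divergence-free polynomials only (admissible, since averaged Taylor polynomials commute with the divergence). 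That bookkeeping is the actual substance of the lemma, and your sketch defers exactly it. The paper sidesteps the issue entirely: it sets $\bm\delta=I_K\nabla\times\bm u-\nabla\times\Lambda_K\bm u\in\bm P_{k-1}$, expands $\bm\delta$ in the nodal basis dual to the DOFs of $I_K$, shows every coefficient vanishes except the edge family $\nabla\big(\bm\delta\cdot(\bm v_2^i\times\bm v_3^i)\big)\cdot\bm v_3^i$, estimates those surviving coefficients by $C|K|^{-1/2}h_K^{3}h_K^{s-1}\left|\nabla\times\bm u\right|_{s,K}$ via the divergence-free constraint and Lemma \ref{err-affine-interp-curlu}, and concludes with a scaling bound on the corresponding basis functions.
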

\begin{proof}
For simplicity of notations, we let $\bm w=I_K \nabla\times \bm u-\nabla\times\Lambda_K\bm u$. Since $\bm w\in \bm P_{k-1}$, we have $\bm w=I_K\bm w=\sum c_i(\bm w)\bm N_i$, where $c_i(\bm w)$ are the DOFs to define $I_K\bm w$  and $\bm N_i$ are the corresponding basis functions. We first show all the DOFs  vanish except $\nabla(\bm w \cdot (\bm v^i_2\times\bm v^i_3))\cdot {\bm v}^i_3 $. Some of those are obvious 0 by the definition of $I_K$ and $\Lambda_K$.
We only  check the others. At first,
\begin{align*}
\bm w_{3z}&=(I_K \nabla\times \bm u-\nabla\times\Lambda_K\bm u)_{3z}
=(I_K \nabla\times \bm u)_{3z}+(\nabla\times\Lambda_K\bm u)_{1x}+(\nabla\times\Lambda_K\bm u)_{2y}\\&=( \nabla\times \bm u)_{3z}+(\nabla\times\bm u)_{1x}+(\nabla\times\bm u)_{2y}=0.
\end{align*}
\noindent
Similarly,
$\bm w_{1xx}=\bm w_{2yy}=\bm w_{3zz}=0.$\\
Applying integration by parts as well as  the definition of $I_K$ and $\Lambda_K$, we have
\begin{align*}
&\int_{  f_i}\bm w  \cdot  {\bm n}_i C\mathrm{d} S=\int_{  f_i} \nabla\times \bm u\cdot  {\bm n}_i C\mathrm{d} S-\int_{  f_i}(\nabla\times\Lambda_K\bm u)  \cdot  {\bm n}_i C\mathrm{d} S\\
=\int_{  f_i} \nabla\times& \bm u\cdot  {\bm n}_i C\mathrm{d} S-\int_{\partial f_i}\Lambda_K\bm u \cdot  {\bm \tau} C\mathrm{d} s=\int_{  f_i} \nabla\times \bm u\cdot  {\bm n}_i C\mathrm{d} S-\int_{\partial f_i}\bm u \cdot  {\bm \tau} C\mathrm{d} s=0.
\end{align*}
Finally, using the definition of $I_K$ and the fact $\nabla\cdot \nabla \times =0,$ we have
\begin{align*}
&\int_{f_i}\nabla\cdot\bm wq\mathrm{d} S =\int_{f_i}\nabla\cdot( \nabla\times \bm u-\nabla\times\Lambda_K\bm u)q\mathrm{d} S =0,\\
&\int_{  K}\nabla \cdot \bm w { q}\mathrm{d}  V=\int_{  K}\nabla \cdot ( \nabla\times \bm u-\nabla\times\Lambda_K\bm u) { q}\mathrm{d}  V=0.
\end{align*}
Now we estimate the non-vanishing term. By the definition of $I_K$, we have
\begin{align*}
&\big[\nabla(\bm w \cdot (\bm v^i_2\times\bm v^i_3))\cdot {\bm v}^i_3\big](\bm e_{ij}^1) =\big[\nabla\big(\nabla\times (\bm u-\Lambda_K\bm u) \cdot (\bm v^i_2\times\bm v^i_3)\big)\cdot {\bm v}^i_3\big](\bm e_{ij}^1).
\end{align*}
Since the divergence of $\nabla\times (\bm u-\Lambda_K\bm u)$ is 0, we can find 8 constants $C_{[m,n]}^l \ (1\leq l\leq 3, \ 1\leq m<n\leq 3\text{ except the case } l=3, m=2, \text{ and } n=3)$ independent of $h_K$ s.t.
\begin{align*}
&\quad\quad\quad\big[\nabla\big( \nabla\times (\bm u-\Lambda_K\bm u) \cdot (\bm v^i_2\times\bm v^i_3)\big)\cdot {\bm v}^i_3\big](\bm e_{ij}^1)\\
=&\sum _{\{l,[m,n]\}\neq \{3,[2,3]\}}C_{[m,n]}^l\big[\nabla\big( \nabla\times (\bm u-\Lambda_K\bm u) \cdot (\bm v^i_m\times\bm v^i_n)\big)\cdot {\bm v}^i_l\big](\bm e_{ij}^1),
\end{align*}
which can be finished by mapping to the reference element, finding the constants and then mapping back.
Furthermore, by the definition of $\Lambda_K\bm u$, we have
\begin{align*}
&\sum _{\{l,[m,n]\}\neq \{3,[2,3]\}}C_{[m,n]}^l\big[\nabla\big( \nabla\times (\bm u-\Lambda_K\bm u) \cdot (\bm v^i_m\times\bm v^i_n)\big)\cdot {\bm v}^i_l\big](\bm e_{ij}^1)\\
=&\sum _{1\leq m< n\leq 3}C_{[m,n]}^1\big[\nabla\big( \nabla\times (\bm u-\Lambda_K\bm u) \cdot (\bm v^i_m\times\bm v^i_n)\big)\cdot {\bm v}^i_1\big](\bm e_{ij}^1).
\end{align*}
Since $ \nabla\times \Lambda_K\bm u$ restricted on the edge $\bm  e_i$ is a polynomial vector of order $k-1$ which can be determined by 
all the vertex DOFs $\nabla\times \Lambda_K\bm u(\bm p_i)=I_K\nabla\times\bm u(\bm p_i)$, $D(\nabla\times \Lambda_K\bm u(\bm p_i))=D(I_K\nabla\times\bm u(\bm p_i))$, and $D^2(\nabla\times \Lambda_K\bm u(\bm p_i))=D^2(I_K\nabla\times\bm u(\bm p_i))$ and the edge DOFs $ \nabla\times \Lambda_K\bm u({\bm e}^0_{ij})=I_K\nabla\times\bm u({\bm e}^0_{ij})$, we arrive at
\[\sum _{1\leq m< n\leq 3}C_{[m,n]}^1\big[\nabla\big(( \nabla\times \bm u- I_K\nabla\times \bm u) \cdot (\bm v^i_m\times\bm v^i_n)\big)\cdot {\bm v}^i_1\big](\bm e_{ij}^1).\]
Therefore, by Lemma \ref{err-affine-interp-curlu}, we have
\begin{align}
&\big[\nabla\big(\nabla\times (\bm u-\Lambda_K\bm u) \cdot (\bm v^i_2\times\bm v^i_3)\big)\cdot {\bm v}^i_3\big](\bm e_{ij}^1)\nonumber\\
=&\sum _{1\leq m< n\leq 3}C_{[m,n]}^1\big[\nabla\big(( \nabla\times \bm u- I_K\nabla\times \bm u) \cdot (\bm v^i_m\times\bm v^i_n)\big)\cdot {\bm v}^i_1\big](\bm e_{ij}^1) \nonumber\\
\leq & C h_K^3\left|{\nabla}\times \bm u-I_K( {\nabla}\times \bm u)\right|_{1,\infty,K}\nonumber\\
\leq &C|K|^{-1/2}h_K^3{h_K^{s-1}}\left|{\nabla}\times\bm u\right|_{s,K}.\label{nonzero-estimate}
\end{align}
Suppose $\bm N_i$ are the basis functions associated with the non-vanishing DOFs. Then
\begin{align}\label{basis-estimate}
\left\|\bm  N_i\right\|_{m,q,K}\leq  Ch^{-2-m}|K|^{1/q}\left\|\hat{\bm  N_i}\right\|_{m,q,\hat K},
\end{align}
where $\hat{\bm N_i}=|B_K|B_K^{-1}\bm N_i$ are the basis functions on the reference element.

Combining \eqref{nonzero-estimate} and \eqref{basis-estimate}, we complete the proof.
\end{proof}

\begin{theorem}\label{err-affine-interp}
	If $\bm u\in \bm H^s(K)$ and $\nabla\times\bm u\in  \bm H^s(K)\cap \bm C^2(\bar K)$ with $s>\frac{5}{2}$, then we have the following error estimates for the interpolation $\Lambda_K$,
	\begin{align}
	&\left\|\bm u-\Lambda_K\bm u\right\|_K\leq Ch_K^{\min\{s,k\}}(\left\|\bm u\right\|_{s,K}+\left\|\nabla\times\bm u\right\|_{s,K}),\label{uerror}\\
	&\left\|\nabla\times(\bm u-\Lambda_K\bm u)\right\|_{m,q,K}\leq C|K|^{1/q-1/2}h_K^{s-m}\left\|\nabla \times \bm  u\right\|_{s,K}.
	\end{align}
\end{theorem}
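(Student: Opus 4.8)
The plan is to settle the curl estimate first, as it follows at once from the two preceding lemmas, and then to reduce the $\bm L^2$ estimate to it through a scaled Friedrichs inequality on $\mathcal R_k$. For the second inequality I split
\[
\nabla\times(\bm u-\Lambda_K\bm u)=\bigl(\nabla\times\bm u-I_K(\nabla\times\bm u)\bigr)+\bigl(I_K(\nabla\times\bm u)-\nabla\times\Lambda_K\bm u\bigr),
\]
bound the first bracket by Lemma \ref{err-affine-interp-curlu} with $\bm w=\nabla\times\bm u$ and the second by Lemma \ref{IK-LambdaK}; each piece is $\le C|K|^{1/q-1/2}h_K^{s-m}\|\nabla\times\bm u\|_{s,K}$, so the triangle inequality closes this part.

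For the first inequality I introduce a polynomial $\bm p\in\mathcal R_k$ approximating $\bm u$ (e.g.\ a componentwise averaged Taylor polynomial of degree $k-1$), for which $\|\bm u-\bm p\|_{0,K}\le Ch_K^{\min\{s,k\}}\|\bm u\|_{s,K}$ and $\|\nabla\times(\bm u-\bm p)\|_{0,K}\le Ch_K^{\min\{s,k\}-1}\|\bm u\|_{s,K}$. Since $\Lambda_K$ reproduces $\mathcal R_k$, one has $\Lambda_K\bm u-\bm p=\Lambda_K(\bm u-\bm p)$, whence
\[
\|\bm u-\Lambda_K\bm u\|_{0,K}\le\|\bm u-\bm p\|_{0,K}+\|\Lambda_K(\bm u-\bm p)\|_{0,K}.
\]
The heart of the matter is a scaled norm-equivalence on $\mathcal R_k$: for every $\bm v\in\mathcal R_k$,
\[
\|\bm v\|_{0,K}\le C\Bigl(\textstyle\sum_i|d_i(\bm v)|+h_K\|\nabla\times\bm v\|_{0,K}\Bigr),
\]
where the $d_i$ run over the suitably scaled $\bm u$-type degrees of freedom (the tangential edge integrals and the $\bm u$-moments on faces and in the interior). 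I would prove this first on $\hat K$, where it is a genuine norm equivalence on the finite-dimensional space $\mathcal R_k(\hat K)$: it holds because the $\bm u$-type degrees of freedom together with $\hat\nabla\times\hat{\bm v}$ are unisolvent on $\mathcal R_k$ — if $\hat\nabla\times\hat{\bm v}=0$ then $\hat{\bm v}=\hat\nabla\phi$ and the tangential, face, and interior $\bm u$-moments force $\hat{\bm v}=0$, exactly as in the gradient step of the unisolvence proof — and then I transport it to $K$ via the Piola map \eqref{mapping-u} and Lemma \ref{relation}. Applying it to $\bm v=\Lambda_K(\bm u-\bm p)$, the $\bm u$-type degrees of freedom of $\bm v$ coincide with those of $\bm u-\bm p$ (matched by $\Lambda_K$) and are controlled by $\|\bm u-\bm p\|_{0,K}$, while
\[
\|\nabla\times\bm v\|_{0,K}\le\|\nabla\times(\bm u-\Lambda_K\bm u)\|_{0,K}+\|\nabla\times(\bm u-\bm p)\|_{0,K}
\]
is controlled by the curl estimate already proved and by the approximation bound above. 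Since the prefactor $h_K$ raises the order of the curl term by one, the two contributions combine to $Ch_K^{\min\{s,k\}}(\|\bm u\|_{s,K}+\|\nabla\times\bm u\|_{s,K})$.

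The main obstacle is this scaled Friedrichs estimate. Its qualitative content (injectivity of the $\bm u$-type moments plus the full curl on $\mathcal R_k$) I would extract from the gradient part of the earlier unisolvence argument; the quantitative content is more tedious, since the distinct families of degrees of freedom — edge tangential integrals, face and interior moments, and, through $\nabla\times\bm v$, the point-value curl data — scale differently under $B_K$, and these powers must be tracked so that the reference-element constant reproduces exactly $h_K^{\min\{s,k\}}$ together with the prefactor $|K|^{1/q-1/2}$ appearing in the curl estimate. Finally, one should check that under the hypotheses $\bm u\in\bm H^s$, $\nabla\times\bm u\in\bm H^s\cap\bm C^2$ with $s>5/2$ every functional used is a bounded one, which is guaranteed by the well-definedness of the degrees of freedom established earlier.
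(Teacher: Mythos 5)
Your curl estimate is exactly the paper's proof: the same splitting through $I_K(\nabla\times\bm u)$, with Lemma \ref{err-affine-interp-curlu} and Lemma \ref{IK-LambdaK} bounding the two pieces; that half is correct. For \eqref{uerror} you diverge: the paper obtains it directly from the affine relation of Lemma \ref{relation} and the standard reference-element scaling/Bramble--Hilbert argument (citing Theorem 3.11 of \cite{curlcurl-conforming2D}), whereas you propose polynomial reproduction plus a discrete Friedrichs inequality on $\mathcal R_k$. The qualitative core of that inequality is in fact sound: if $\nabla\times\bm v=0$ then $\bm v=\nabla\phi$ with $\phi\in P_k$, and vanishing of the tangential edge moments, first-kind face moments, and first-kind interior moments forces $\phi=0$ by the same integration-by-parts chain as the gradient step of the unisolvence proof, so finite dimensionality gives the norm equivalence on $\hat K$.

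The genuine gap is in the application, where you assert that the $\bm u$-type degrees of freedom of $\bm u-\bm p$ ``are controlled by $\|\bm u-\bm p\|_{0,K}$.'' This is false for the edge and face functionals: $\int_{\bm e}(\bm u-\bm p)\cdot\bm\tau\, q\,\mathrm{d}s$ and $\frac{1}{area(f)}\int_{f}(\bm u-\bm p)\cdot\bm q\,\mathrm{d}S$ are integrals over one- and two-dimensional subsets of $K$ and are not continuous functionals on $\bm L^2(K)$ --- one can perturb a function in a thin neighborhood of an edge or face, changing the moment arbitrarily while leaving $\|\bm u-\bm p\|_{0,K}$ essentially unchanged, so no constant (however $h_K$-dependent) can make such a bound valid. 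This is precisely why the paper's well-definedness theorem requires $\bm u\in\bm H^{1/2+\delta}(K)$ with $\nabla\times\bm u\in\bm L^p(K)$, $p>2$. To repair the step you must bound $|d_i(\bm u-\bm p)|$ in those trace-compatible norms and then invoke approximation estimates for $\bm p$ in the same norms, tracking the Piola scaling of each DOF family; but at that point you are bounding reference-element DOFs by reference-element Sobolev norms, using polynomial invariance, and applying Bramble--Hilbert --- i.e., you have reproduced the standard argument the paper cites, and the detour through $\bm p$ and the Friedrichs inequality buys nothing. As written, the proof of \eqref{uerror} does not close.
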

\begin{proof}
	Due to the relationship $\Lambda_K \bm u\circ F_K=B_K^{-T}\Lambda_{\hat K}\hat {\bm u}$ obtained in Lemma \ref{relation}, the proof of \eqref{uerror} is standard, cf, Theorem 3.11 in \cite{curlcurl-conforming2D}. Combined Lemma \ref{err-affine-interp-curlu} and Lemma \ref{IK-LambdaK}, we obtain
	\begin{align*}
	\left\|\nabla\times(\bm u-\Lambda_K\bm u)\right\|_{m,q,K}&\leq \left\|\nabla\times\bm u-I_K\nabla\times\bm u)\right\|_{m,q,K}+\left\|I_K\nabla\times\bm u-\nabla\times\Lambda_K\bm u)\right\|_{m,q,K}\\
	&\leq C|K|^{1/q-1/2}h_K^{s-m}\left\|\nabla \times \bm  u\right\|_{s,K}.
	\end{align*}
	\end{proof}
\begin{theorem}\label{err-interp}
	If $\bm u\in \bm H^s(K)$ and $\nabla\times\bm u\in  \bm H^s(K)$ with $s>\frac{7}{2}$, then we have the following error estimates for the interpolation $\Pi_K$,
	\begin{align}
	&\left\|\bm u-\Pi_K\bm u\right\|_K\leq C{h_K^{{\min\{s,k\}}}}(\left\|\bm u\right\|_{s,K}+\left\|\nabla\times\bm u\right\|_{s,K}),\\
	&\left\|\nabla\times(\bm u-\Pi_K\bm u)\right\|_K\leq Ch_K^{\min\{s,k\}}\left\|\nabla\times\bm u\right\|_{s,K},\\
	&	\left\|(\nabla\times)^2(\bm u-\Pi_K\bm u)\right\|_K\leq Ch^{{\min\{s,k\}}-1}_K\left\|\nabla\times\bm u\right\|_{s,K}.
	\end{align}
\end{theorem}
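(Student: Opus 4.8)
The plan is to estimate $\Pi_K$ by comparison with the auxiliary interpolant $\Lambda_K$, for which Theorem \ref{err-affine-interp} already supplies the $L^2$- and curl-estimates, and to control only the discrepancy $\bm\eta:=\Pi_K\bm u-\Lambda_K\bm u\in\mathcal{R}_k$. Since $s>7/2$ forces $\bm H^s(K)\hookrightarrow\bm C^2(\bar K)$, the hypothesis $\nabla\times\bm u\in\bm C^2(\bar K)$ needed by Theorem \ref{err-affine-interp} and by Lemmas \ref{err-affine-interp-curlu} and \ref{IK-LambdaK} is automatic. Because both $\Pi_K$ and $\Lambda_K$ reproduce $\mathcal{R}_k$, we have $\bm\eta=\Pi_K(\bm u-\Lambda_K\bm u)$; applying an arbitrary degree of freedom $c\in\Sigma_K$ and using $c(\Pi_K\bm v)=c(\bm v)$ together with the defining relations of $\Lambda_K$, every DOF of $\bm\eta$ vanishes except the edge DOFs at the nodes ${\bm e}^1_{ij}$ of the normal-derivative form $\nabla(\nabla\times\bm u\cdot\bm v)\cdot\bm w$, which appear in $\Sigma_K$ but not in $\tilde\Sigma_K$. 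For each such DOF the coefficient $c_i(\bm u-\Lambda_K\bm u)$ is a first derivative of $\nabla\times(\bm u-\Lambda_K\bm u)$ at a point, hence
\[|c_i(\bm u-\Lambda_K\bm u)|\leq C|\nabla\times(\bm u-\Lambda_K\bm u)|_{1,\infty,K}\leq C|K|^{-1/2}h_K^{s-1}\|\nabla\times\bm u\|_{s,K},\]
the last inequality being the $m=1,\ q=\infty$ case of the curl-estimate in Theorem \ref{err-affine-interp} (valid since $s>5/2$ gives $\bm H^s(K)\hookrightarrow\bm W^{1,\infty}(K)$).

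Next I would convert these DOF bounds into norm bounds for $\bm\eta$ by pulling back to $\hat K$. Writing $\bm\eta\circ F_K=B_K^{-T}\hat{\bm\eta}$, the Piola relations \eqref{mapping-u} and \eqref{mapping-curlu} show that $\|\bm\eta\|_K$, $\|\nabla\times\bm\eta\|_K$ and $\|(\nabla\times)^2\bm\eta\|_K$ equal $|K|^{1/2}$ times $h_K^{-1}$, $h_K^{-2}$, $h_K^{-3}$ respectively, times the corresponding reference norms of $\hat{\bm\eta}$, i.e.\ one extra negative power of $h_K$ per curl. On the finite-dimensional space $\mathcal{R}_k$ all these reference norms are equivalent and controlled by $\max_i|\hat c_i(\hat{\bm\eta})|$; expressing the reference DOFs $\hat c_i(\hat{\bm\eta})$ in terms of the physical ones $c_i(\bm\eta)=c_i(\bm u-\Lambda_K\bm u)$ contributes a factor $h_K^{3}$ from the $B_K$-bookkeeping and the normalizations \eqref{n}--\eqref{tau}, exactly as in the derivation of \eqref{nonzero-estimate}--\eqref{basis-estimate}. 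Combining the two factors gives
\[\|\bm\eta\|_K\leq Ch_K^{s+1}\|\nabla\times\bm u\|_{s,K},\quad \|\nabla\times\bm\eta\|_K\leq Ch_K^{s}\|\nabla\times\bm u\|_{s,K},\quad \|(\nabla\times)^2\bm\eta\|_K\leq Ch_K^{s-1}\|\nabla\times\bm u\|_{s,K}.\]

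The three assertions then follow from the triangle inequality applied to $\bm u-\Pi_K\bm u=(\bm u-\Lambda_K\bm u)-\bm\eta$. For the first two estimates I would bound $\|\bm u-\Lambda_K\bm u\|_K$ and $\|\nabla\times(\bm u-\Lambda_K\bm u)\|_K$ by Theorem \ref{err-affine-interp} (taking $m=0,\ q=2$ in the curl-estimate) and add the $\bm\eta$-contributions, which are of equal or higher order, so that the rate $h_K^{\min\{s,k\}}$ is preserved. Theorem \ref{err-affine-interp} gives no curl--curl bound, so for the top-order estimate I would write $(\nabla\times)^2(\bm u-\Lambda_K\bm u)=\nabla\times\big(\nabla\times\bm u-\nabla\times\Lambda_K\bm u\big)$, insert $I_K(\nabla\times\bm u)$, and bound $\nabla\times(\nabla\times\bm u-I_K\nabla\times\bm u)$ by $|\nabla\times\bm u-I_K\nabla\times\bm u|_{1,K}\leq Ch_K^{s-1}\|\nabla\times\bm u\|_{s,K}$ via Lemma \ref{err-affine-interp-curlu} ($m=1,\ q=2$), and $\nabla\times(I_K\nabla\times\bm u-\nabla\times\Lambda_K\bm u)$ by the $m=1,\ q=2$ case of Lemma \ref{IK-LambdaK}; adding the $\|(\nabla\times)^2\bm\eta\|_K$ bound then yields the stated rate $h_K^{\min\{s,k\}-1}$. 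When $s>k$ the exponent is capped at $k$ by the usual reduction to the reproduced polynomial degree.

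The step demanding the most care is the factor $h_K^{3}$ relating the reference and physical edge DOFs in the second paragraph: precisely because the ${\bm e}^1_{ij}$ degrees of freedom are genuine normal derivatives, the associated basis functions do not transform covariantly under \eqref{mapping-u} (this is the very reason $\Lambda_K$ was introduced), so the scaling cannot be read off a clean Piola identity and must instead be tracked through the $B_K$-factors and the direction-vector normalizations \eqref{n}--\eqref{tau}. Getting this single extra power of $h_K$ right for each curl is exactly what turns the $\min\{s,k\}$ rate of the $L^2$- and curl-estimates into the $\min\{s,k\}-1$ rate of the curl--curl estimate.
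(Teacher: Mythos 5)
Your overall strategy is the same as the paper's: decompose $\bm u-\Pi_K\bm u=(\bm u-\Lambda_K\bm u)+(\Lambda_K\bm u-\Pi_K\bm u)$, observe that all DOFs of the discrepancy vanish except the normal-derivative edge DOFs at the nodes $\bm e^1_{ij}$, bound those DOF values through the $m=1,\,q=\infty$ case of Theorem \ref{err-affine-interp}, convert to norm bounds with a net scaling that loses one power of $h_K$ per curl, and finish by the triangle inequality; your exponents $h_K^{s+1}$, $h_K^{s}$, $h_K^{s-1}$ for the discrepancy agree with what the paper obtains.

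However, there is a genuine gap at the conversion step, and it is not the ``$h_K^3$ bookkeeping'' you flag as the delicate point. You pull $\bm\eta$ back to $\hat{\bm\eta}$, bound the reference norms by $\max_i|\hat c_i(\hat{\bm\eta})|$, and then assert that each reference DOF $\hat c_i(\hat{\bm\eta})$ is a $B_K$-weighted combination of the five non-vanishing physical DOFs. That assertion is false as stated: since $B_K$ does not map the normal plane of a reference edge to the normal plane of the physical edge, the pulled-back DOFs $\hat\nabla\big(\hat\nabla\times\hat{\bm\eta}\cdot\hat{\bm v}\big)\cdot\hat{\bm w}$ involve \emph{all nine} entries of $\nabla(\nabla\times\bm\eta)$ at $\bm e^1_{ij}$ written in the frame $\{\bm\tau,\bm n,\bm m\}$, and four of them --- the three tangential-derivative entries $\nabla(\nabla\times\bm\eta\cdot\bm v)\cdot\bm\tau$ and the entry $\nabla(\nabla\times\bm\eta\cdot\bm m)\cdot\bm m$ --- are not DOFs of $\Sigma_K$ at all, so they are neither among your vanishing DOFs nor among your five controlled ones; nor can they be bounded by $|\nabla\times(\bm u-\Lambda_K\bm u)|_{1,\infty,K}$, because $\bm\eta$ contains $\Pi_K\bm u$, which is precisely what you are trying to estimate. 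Closing the argument requires the two facts the paper's proof (and Lemma \ref{IK-LambdaK}) supplies: (i) $\nabla\times\bm\eta$ vanishes identically along each edge, since its restriction to an edge is a componentwise polynomial of degree $k-1$ determined by the vanishing shared DOFs (values, first and second tangential derivatives at the endpoints, with the excluded components recovered from $\nabla\cdot\nabla\times\bm\eta=0$, plus the values at the $k-6$ points $\bm e^0_{ij}$), which kills the three tangential entries; and (ii) divergence-freeness gives $\nabla(\nabla\times\bm\eta\cdot\bm m)\cdot\bm m=-\nabla(\nabla\times\bm\eta\cdot\bm n)\cdot\bm n$, one of the controlled DOFs. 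With (i)--(ii) inserted your scaling chain closes, and the proof becomes in substance the paper's, which performs the same reduction by expanding $\bm\Delta$ in the basis of the affine-equivalent element of Definition \ref{tetra-affine} and estimating as in \eqref{nonzero-estimate}--\eqref{basis-estimate}.
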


\begin{proof}
   Since $\bm u-\Pi_K\bm u= \bm u-\Lambda_K \bm u+ \Lambda_K \bm u-\Pi_K\bm u$, it remains to estimate $\Lambda_K \bm u-\Pi_K\bm u$ in three different norms or semi-norms.
We denote $\bm  \Delta=\Lambda_K \bm u-\Pi_K\bm u$ which is a polynomial with a degree of no more than $7$. Also, the DOFs in $\tilde\Sigma_K$ for $\bm \Delta$ vanish except $\big(\nabla( {\nabla}\times {\bm  \Delta}\cdot (\bm v^i_m\times\bm v^i_n))\cdot {\bm v}^i_l \big)(  {\bm e}^1_{ij})$. 
Then 
\begin{align*}
\bm \Delta=\sum_{i=1}^6\sum_{j=1}^{k-5}\sum_{l=2}^3\sum_{[m,n]}\big(\nabla( {\nabla}\times {\bm  \Delta}\cdot (\bm v^i_m\times\bm v^i_n))\cdot {\bm v}^i_l \big)(  {\bm e}^1_{ij})\bm N_{ijl}^{[m,n]},
\end{align*}
where $\bm N_{ijl}^{[m,n]}$ are the basis functions of the finite elements defined in Definition \ref{tetra-affine} which are associated with the DOFs
$\big(\nabla( {\nabla}\times {\bm  \Delta}\cdot (\bm v^i_m\times\bm v^i_n))\cdot {\bm v}^i_l \big)(  {\bm e}^1_{ij})$.
Since ${\nabla}\times {\bm  \Delta}$ is divergence-free polynomial on the edge $\bm e_i$, $\nabla( {\nabla}\times {\bm  \Delta}\cdot \bm m)\cdot \bm m$, restricted on the edge $\bm e_i$, can be determined by the edge DOFs of the form $\nabla( {\nabla}\times {\bm  \Delta}\cdot \bm w_i)\cdot \bm w_j$. 
Hence, writing $\bm v^i_m\times\bm v^i_n$ and ${\bm v}^i_l$ as a linear combination of $\bm \tau,\bm m,\bm n$, we get
\begin{align*}
&\nabla( {\nabla}\times {\bm  \Delta}\cdot (\bm v^i_m\times\bm v^i_n))\cdot {\bm v}^i_l\\
=&\nabla\big( {\nabla}\times {\bm  \Delta}\cdot [(\bm v^i_m\times\bm v^i_n\cdot\bm \tau)\bm\tau+(\bm v^i_m\times\bm v^i_n\cdot\bm n)\bm n+(\bm v^i_m\times\bm v^i_n\cdot\bm m)\bm m]\big)\\
& \cdot \big(({\bm v}^i_l \cdot \bm \tau)\bm \tau+({\bm v}^i_l \cdot \bm n)\bm n+({\bm v}^i_l \cdot \bm m)\bm m\big)\\
=&\nabla\big( {\nabla}\times {\bm  \Delta}\cdot [(\bm v^i_m\times\bm v^i_n\cdot\bm \tau)\bm\tau+(\bm v^i_m\times\bm v^i_n\cdot\bm n)\bm n+(\bm v^i_m\times\bm v^i_n\cdot\bm m)\bm m]\big)\\
& \cdot \big(({\bm v}^i_l \cdot \bm n)\bm n+({\bm v}^i_l \cdot \bm m)\bm m\big)\\
=&\nabla\big( {\nabla}\times( \bm u-\Lambda_K\bm u)\cdot [(\bm v^i_m\times\bm v^i_n\cdot\bm \tau)\bm\tau+(\bm v^i_m\times\bm v^i_n\cdot\bm n)\bm n+(\bm v^i_m\times\bm v^i_n\cdot\bm m)\bm m]\big)\\
& \cdot \big(({\bm v}^i_l \cdot \bm n)\bm n+({\bm v}^i_l \cdot \bm m)\bm m\big).
\end{align*}
Each term has the following estimate. We only show the first term
	\begin{align*}
	&\nabla\big( {\nabla}\times( \bm u-\Lambda_K\bm u)\cdot ((\bm v^i_m\times\bm v^i_n)\cdot\bm \tau)\bm\tau\big)\cdot ({\bm v}^i_l \cdot \bm n)\bm n\\
	\leq &
	Ch_K^3\left|{\nabla}\times( \bm u-\Lambda_K\bm u)\right|_{1,\infty,K}\\
	\leq& C|K|^{-1/2}h_K^3{h_K^{s-1}}\left|{\nabla}\times\bm u\right|_{s,K}.
	\end{align*}
According to the mapping \eqref{mapping-u}, the basis functions $\bm N_{ijl}^{[m,n]}$ satisfy
\begin{align*}
&\left\|\bm N^{[m,n]}_{ijl}\right\|\leq C{h_K^{1/2}}\left\|\hat {\bm N}^{[m,n]}_{ijl}\right\|,\\
&\left\|\nabla\times \bm N^{[m,n]}_{ijl}\right\|\leq C{h_K^{-1/2}}\left\|\hat{ \nabla}\times\hat {\bm N}^{[m,n]}_{ijl}\right\|,\\
&\left\|\nabla\times\nabla\times \bm N^{[m,n]}_{ijl}\right\|\leq C{h_K^{-3/2}}\left\|\hat{ \nabla}\times\hat{ \nabla}\times\hat {\bm N}^{[m,n]}_{ijl}\right\|,
\end{align*}
where $\hat {\bm N}^{[m,n]}_{ijl}$ are the corresponding basis functions on $\hat K$ and satisfy ${\bm N}^{[m,n]}_{ijl}=B_K^{-T}\hat {\bm N}^{[m,n]}_{ijl}$.
By combining the above estimates, we complete the proof.
\end{proof}

\section{Numerical Experiments}
In this section, we use the $H(\text{curl}^2)$-conforming finite elements developed in Section 3 to solve the quad-curl problem which is introduced as: for $\bm  f\in H(\td^0;\Omega)$, find $\bm u$ s.t.
\begin{equation}\label{prob1}
\begin{split}
(\nabla\times)^4\bm u+\bm u&=\bm f\ \ \text{in}\;\Omega,\\
\nabla \cdot \bm u &= 0\ \ \text{in}\;\Omega,\\
\bm u\times\bm n&=0\ \ \text{on}\;\partial \Omega,\\
\nabla \times \bm u&=0\ \  \text{on}\;\partial \Omega,
\end{split}
\end{equation}
where $\Omega \in\mathbb{R}^3$ is a contractable Lipschitz domain and  $\bm n$ is the unit outward normal vector to $\partial \Omega$.
Divergence-free condition $\nabla\cdot\bm u=0$ satisfies automatically, since we have the lower-order term $\bm u$ in the equation \eqref{prob1}.
The variational formulation is to
find $\bm u\in H_0(\tc^2;\Omega)$,  s.t.
\begin{equation}\label{prob22}
\begin{split}
a(\bm u,\bm v)&=(\bm f, \bm v)\quad \forall \bm v\in H_0(\tc^2;\Omega),
\end{split}
\end{equation}
with $a(\bm u,\bm v)=(\nabla\times\nabla\times\bm u,\nabla\times\nabla\times\bm v) + (\bm u,\bm v)$.

Let \,$\mathcal{T}_h\,$ be a partition of the domain $\Omega$
consisting of shape-regular tetrahedra. We denote by $h_K$ the diameter of each element $K \in
\mathcal{T}_h$ and by $h$ the mesh size of $\mathcal {T}_h$. We define
\begin{eqnarray*}
	&&  V_h=\{\bm{v}_h\in H(\text{curl}^2;\Omega):\ \bm v_h|_K\in \mathcal{R}_k,\ \forall K\in\mathcal{T}_h\}.\\
	&&   V^0_h=\{\bm{v}_h\in V_h,\ \bm{n} \times \bm{v}_h=0\ \text{and}\ \nabla\times  \bm{v}_h = 0 \ \text {on} \ \partial\Omega\}.
\end{eqnarray*}

Now we define a global interpolation operator $\Pi_h$ which is defined piecewisely:
\[\Pi_h|_K=\Pi_K.\]
The global interpolation has the following error estimates.
\begin{theorem}\label{global-err-interp}
	If $\bm u\in \bm H^s(\Omega)$ and $\nabla\times\bm u\in  \bm H^s(\Omega)$ with $s>\frac{7}{2}$, then we have the following error estimates for the interpolation $\Pi_h$,
	\begin{align}
	&\left\|\bm u-\Pi_h\bm u\right\|\leq C{h^{\min\{s,k\}}}(\left\|\bm u\right\|_{s}+\left\|\nabla\times\bm u\right\|_{s}),\\
	&\left\|\nabla\times(\bm u-\Pi_h\bm u)\right\|\leq Ch^{\min\{s,k\}}\left\|\nabla\times\bm u\right\|_{s},\\
	&	\left\|(\nabla\times)^2(\bm u-\Pi_h\bm u)\right\|\leq Ch^{{\min\{s,k\}}-1}\left\|\nabla\times\bm u\right\|_{s}.
	\end{align}
\end{theorem}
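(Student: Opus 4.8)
The plan is to derive all three global estimates directly from their local counterparts in Theorem \ref{err-interp} by exploiting the piecewise definition $\Pi_h|_K=\Pi_K$. First I would note that, because the elements in Definition \ref{tetra-DOF} are $H(\tc^2)$-conforming, the piecewise interpolant $\Pi_h\bm u$ lies in $V_h\subset H(\tc^2;\Omega)$; hence $\nabla\times(\bm u-\Pi_h\bm u)$ and $(\nabla\times)^2(\bm u-\Pi_h\bm u)$ are genuine $\bm L^2(\Omega)$ functions and each global $L^2$-norm splits into a sum of squared elementwise contributions. The regularity hypothesis $s>7/2$ guarantees, via the Sobolev embedding $\bm H^s(\Omega)\hookrightarrow\bm C^2(\bar\Omega)$, that $\nabla\times\bm u\in\bm C^2(\bar K)$ on every $K$, so that each local interpolant $\Pi_K\bm u$ is well-defined and Theorem \ref{err-interp} applies.

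For the first estimate I would write
\begin{align*}
\left\|\bm u-\Pi_h\bm u\right\|^2=\sum_{K\in\mathcal{T}_h}\left\|\bm u-\Pi_K\bm u\right\|_K^2\leq C^2\sum_{K\in\mathcal{T}_h}h_K^{2\min\{s,k\}}\left(\left\|\bm u\right\|_{s,K}+\left\|\nabla\times\bm u\right\|_{s,K}\right)^2,
\end{align*}
using the local bound from Theorem \ref{err-interp} on each $K$. Then $h_K\le h$ lets me pull the common factor $h^{2\min\{s,k\}}$ out of the sum, and the elementary inequality $(a+b)^2\le 2(a^2+b^2)$ together with $\sum_K\left\|\bm u\right\|_{s,K}^2=\left\|\bm u\right\|_s^2$ and $\sum_K\left\|\nabla\times\bm u\right\|_{s,K}^2=\left\|\nabla\times\bm u\right\|_s^2$ yields the asserted global bound. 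The curl and double-curl estimates follow in exactly the same way from the corresponding local inequalities (the case $m=0$, $q=2$), the only difference being the power of $h_K$: $\min\{s,k\}$ for $\nabla\times(\bm u-\Pi_K\bm u)$ and $\min\{s,k\}-1$ for $(\nabla\times)^2(\bm u-\Pi_K\bm u)$, which carry over verbatim to the global powers of $h$.

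The single point that requires attention, rather than a true obstacle, is the uniformity of the constant $C$ across the mesh. The local constants in Theorem \ref{err-interp} are produced through the Piola mapping \eqref{mapping-u} and the basis-function scaling, and therefore depend a priori on $B_K$ and $\det(B_K)$. Shape-regularity of $\mathcal{T}_h$ bounds $\|B_K\|$, $\|B_K^{-1}\|$, and the attendant ratios uniformly in $K$, so a single $h$-independent and $K$-independent constant may be used throughout the summation. With this uniformity secured, the three elementwise estimates assemble immediately into the stated global error bounds.
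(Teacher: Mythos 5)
Your proposal is correct and follows essentially the same route as the paper, which proves the theorem in one line by splitting each global $L^2$ norm into elementwise contributions and invoking Theorem \ref{err-interp} on every $K$. Your additional remarks on summing squares correctly (the paper's displayed identity actually contains a small typo, equating the norm to a sum of norms rather than of their squares), on the Sobolev embedding justifying the local interpolants, and on shape-regularity giving a uniform constant only make the same argument more careful.
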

The Theorem is proved by the fact $\left\|(\nabla\times)^i(\bm u-\Pi_h\bm u)\right\|=\sum_{K\in \mathcal T_h}\left\|(\nabla\times)^i(\bm u-\Pi_K\bm u)\right\|_K$ and Theorem \ref{err-interp}.

The $H(\text{curl}^2)$-conforming finite element method seeks $\bm u_h\in V^0_h$,  s.t.
\begin{equation}\label{prob3}
\begin{split}
 a(\bm u_h,\bm v_h)&=(\bm f, \bm v_h)\quad \forall \bm v_h\in V^0_h.
\end{split}
\end{equation}

To implement the boundary conditions, we can either let all the DOFs which yield   boundary conditions be 0 or introduce two Lagrange multipliers.

\subsection{Example 1}

We consider the problem \eqref{prob1} on a unit cube $\Omega=(0,1)\times(0,1)\times(0,1)$ with the exact solution
	\begin{equation}
	\bm u=\left(
	\begin{array}{c}
	0\\
	3\pi\sin^3(\pi x)\sin^3(\pi y)\sin^2(\pi z)\cos(\pi z) \\
	-3\pi \sin^3(\pi x)\sin^3(\pi z)\sin^2(\pi y)\cos(\pi y)\\
	\end{array}
	\right).
	\end{equation}
   Then  the source term $\bm f$ can be obtained by a simple calculation. 
	Denote
	\[\bm e_h=\bm u-\bm u_h.\]
	We partition the unit cube into $N^3$ small cubes and then partition each small cube into 6 congruent tetrahedra. Varying $h=1/N$ from ${1}/{2}$ to ${1}/{8}$, Table \ref{tab1} illustrates the errors and convergence rates of $\bm u_h$ with $k=7$ in several different norms. We can observe the convergence rates of 7 in $H(\text{curl})$ norm and of
	6 in 	$H(\text{curl}^2)$ norm which coincide with the convergence orders of the interpolation $\Pi_h$.
		
\begin{table}[h]
	\centering
	\caption{Example 1: Numerical results by the lowest-order tetrahedral $H(\text{curl}^2)$ element} \label{tab1}
	\begin{adjustwidth}{0cm}{0cm}
			\resizebox{!}{2.6cm}
			{
	\begin{tabular}{ccccccc}
		\hline
		$h$ &$\left\|\bm u-\bm u_h\right\|$& rates&$\left\|\nabla\times\bm u-\nabla\times\bm u_h\right\|$& rates&$\left\|(\nabla\times)^2\bm u-(\nabla\times)^2\bm u_h\right\|$& rates\\
		\hline
		$1\slash2$&3.8334785395e+00 &   10.8753       & 8.0089356298e-01&5.1543
		&1.6715185815e+01&4.0572\\
		$1\slash3$&4.6617638169e-02  & 6.6651 &9.9072060818e-02   &5.1588 
		&3.2261165763e+00 & 4.4177 \\
		$1\slash4$&6.8520104719e-03&  9.6538   &2.2460507680e-02 &  6.7446 &9.0519796164e-01& 5.5650\\
		$1\slash5$&7.9482178822e-04& 9.7577&4.9865729850e-03  &7.4171
		&2.6148268239e-01 & 6.0130\\
		$1\slash6$&1.3416712567e-04 &  9.7979  &1.2897568262e-03    & 7.3648
		&8.7363073645e-02& 5.9716\\
		$1\slash7$&2.9628521344e-05& 9.4904&4.1443815436e-04 & 7.1025
		&3.4797494022e-02& 5.8304\\
		$1\slash8$&8.3433920597e-06  & & 1.6053634598e-04   &
		&1.5974611799e-02 & \\
		\hline
	\end{tabular}
	}
	\end{adjustwidth}
\end{table}
\subsection{Example 2}
In this example, we consider the problem \eqref{prob1} on a 
unit cube with the source term $\bm f=(1,1,1)^T.$
In this case, we can not express the exact solution explicitly, so we  seek an approximation of $\3bar\bm u-\bm u_h\3bar=\sqrt{a(\bm u-\bm u_h,\bm u-\bm u_h)}$.
Due to the  orthogonality $a(\bm u-\bm u_h,\bm u_h)=0$,
we have
\[\3bar\bm u-\bm u_h\3bar^2=\3bar\bm u\3bar^2-2a(\bm u,\bm u_h)+\3bar\bm u_h\3bar^2=\3bar\bm u\3bar^2-\3bar\bm u_h\3bar^2.\]
Since $\bm u_h\in V_h\subset V_{h/2}$, $a(\bm u-\bm u_{h/2},\bm u_h)=0$ and then
\begin{align*}
	\3bar\bm u_{h/2}-\bm u_h\3bar^2=&\3bar\bm u_{h/2}\3bar^2-2a(\bm u_{h/2},\bm u_h)+\3bar\bm u_h\3bar^2\\
=&\3bar\bm u_{h/2}\3bar^2-2a(\bm u,\bm u_h)+\3bar\bm u_h\3bar^2\\
=&\3bar\bm u_{h/2}\3bar^2-\3bar\bm u_h\3bar^2.
\end{align*}
Thanks to $a(\bm u-\bm u_h,\bm u-\bm u_{h/2})=a(\bm u-\bm u_{h/2},\bm u-\bm u_{h/2})$ since $a(\bm u_h,\bm u-\bm u_{h/2})=a(\bm u_{h/2},\bm u-\bm u_{h/2})=0,$ we have
\begin{align*}
	\3bar\bm u_{h/2}-\bm u_h\3bar^2&=\3bar\bm u_{h/2}-\bm u\3bar^2+2a(\bm u_{h}-\bm u,\bm u-\bm u_{h/2})+\3bar\bm u-\bm u_h\3bar^2\\
	=&-\3bar\bm u_{h/2}-\bm u\3bar^2+\3bar\bm u-\bm u_h\3bar^2
\approx \3bar\bm u-\bm u_h\3bar^2.
\end{align*}
Now we can treat $\3bar\bm u_{h/2}-\bm u_h\3bar^2=\3bar\bm u_{h/2}\3bar^2-\3bar\bm u_h\3bar^2$ as an approximation of $\3bar\bm u-\bm u_h\3bar^2$. The Table \ref{tab2} shows that the convergence rates in energy norm are about 2. The convergence rates deteriorate due to the poor solution regularity.

We also draw the graph of the numerical solution at $y=0.1$ (which is close to the boundary). From the graph \ref{fig1}, we do not observe any oscillation phenomenon,  which indicates the boundary conditions are implemented correctly.
\begin{table}[h]
	\centering
	\caption{Example 2: Numerical results by the lowest-order  tetrahedral  $H(\text{curl}^2)$ element} \label{tab2}
	\begin{adjustwidth}{0cm}{0cm}
			\resizebox{!}{1.75cm}
			{
	\begin{tabular}{ccccccc}
		\hline
		$h$ &$\left\|\bm u_h\right\|$&$\left\|\nabla\times\bm u_h\right\|$&$\left\|(\nabla\times)^2\bm u_h\right\|$&$\3bar \bm u-\bm u_h\3bar$& rates\\
		\hline
		$1\slash1$ &4.0503711308e-04& 2.1012866605e-03 &2.2019421906e-02
		&1.9004476086e-02& \\
		$1\slash2$&6.8754227877e-04& 3.4074245801e-03&2.8957231505e-02& 1.9895511952e-03 
		&3.2558 \\
		$1\slash4$&6.8874370251e-04&3.4044210424e-03&2.9025822581e-02&  3.6939587847e-04&2.4292\\
		$1\slash8$&6.8880221694e-04& 3.4044347844e-03&2.9028170036e-02&& \\
		\hline
	\end{tabular}
	}
	\end{adjustwidth}
\end{table}

\begin{figure}
	\includegraphics[scale=0.2]{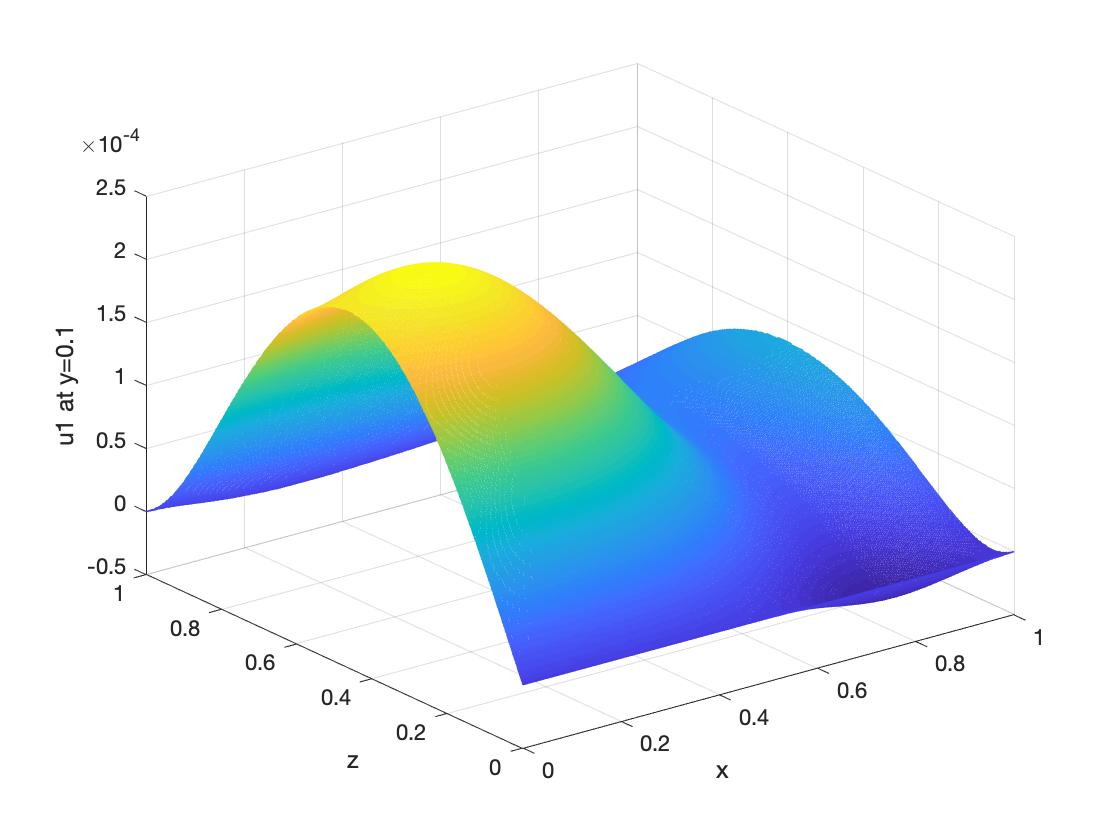}
	\includegraphics[scale=0.2]{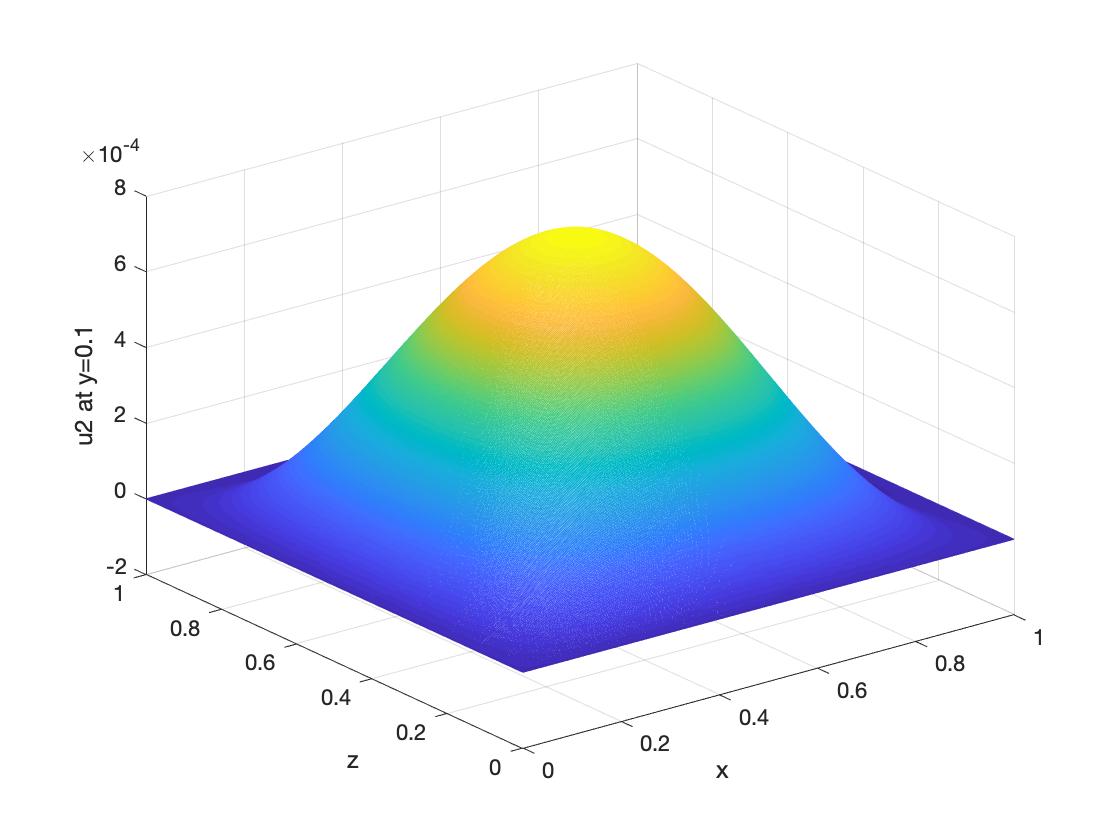}
	\includegraphics[scale=0.2]{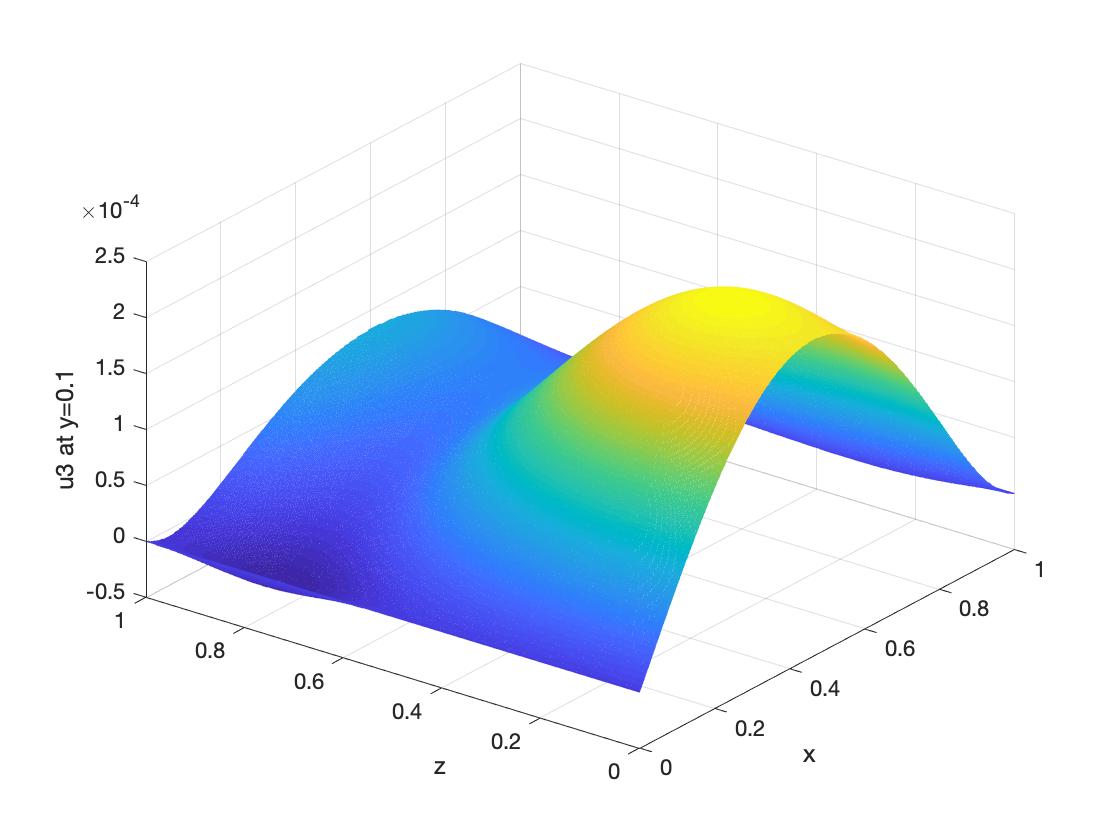}\\
	\caption{The numerical solution  $\bm u_h$ at $y=0.1$.}\label{fig1}
\end{figure}

\subsection{Example 3}
We also consider the problem \eqref{prob1} on an L-shape domain $\Omega=(0,1)\times(0,1)\times(0,1)\slash[0.5,1)\times(0,0.5]\times[0,1]$ with the source term $\bm f=(1,1,1)^T$.
	
Table \ref{tab3} illustrates errors and convergence rates of $\bm u_h$ in this case.  Due to the singularity of the domain, convergence rates deteriorate sharply.

We should note that it is not very precise to use the mesh sizes $1,1/2,1/4,1/5$ for estimating the errors in the sense of energy norm. However, we only can refine the mesh to 1/5 because of the huge number of DOFs. We just use these data to estimate the errors approximately.

\begin{table}[h]
	\centering
	\caption{Example 3: Numerical results by the lowest-order  tetrahedral  $H(\text{curl}^2)$ element} \label{tab3}
	\begin{adjustwidth}{0cm}{0cm}
			\resizebox{!}{1.75cm}
			{
	\begin{tabular}{ccccccc}
		\hline
		$h$ &$\left\|\bm u_h\right\|$&$\left\|\nabla\times\bm u_h\right\|$&$\left\|(\nabla\times)^2\bm u_h\right\|$&$\3bar \bm u-\bm u_h\3bar$& rates\\
		\hline
		
		$1\slash2$& 1.4343168282e-04& 8.3323524761e-04 &1.2019789956e-02&  2.6363756085e-03
		&1.0795 \\
		$1\slash3$&8.7241298710e-05&5.0404132111e-04&9.4396047316e-03 &  1.7018568232e-03&1.2624\\
		$1\slash4$&5.8426817327e-05& 3.3808222459e-04&1.2795582951e-03 & 1.1836017614e-03
& \\
		$1\slash5$&4.1850682102e-05 & 2.4212569094e-04 &6.5995367635e-03&& \\

		\hline
	\end{tabular}
	}
	\end{adjustwidth}
\end{table}

\section{Conclusion}
In this paper, we construct and analyze, for the first time, the tetrahedral $H(\tc^2)$-conforming elements. We employ our new elements to solve the quad-curl problem  in  numerical experiments. It turns out that our new elements work well for solving quad-curl problem. However, the elements have a great number of DOFs, which makes it difficult to get a family of very accurate basis functions and expensive to refine the grid further. In our future research, we will construct hierarchical basis functions of curl-curl conforming elements and try to decrease the number of DOFs.

%
%
%
%
\section*{Appendix:An Efficient Implementation of the Tetrahedral Elements}
Since the finite elements proposed in this article involve normal derivatives to edges, we can not relate the basis functions on a general element and those on the reference element by the mapping \eqref{mapping-u}. In addition, it is difficult to obtain the basis functions on each general element by solving a large-scale (for the lowest-order case, it is 315$\times$315) and ill-conditioned matrix. Hence we apply the method proposed in \cite{Dom2008Algorithm} for the Argyris element to construct our basis functions on a general element. In this section, we will demonstrate this method for the lowest-order case.

Suppose $\bm x_{\alpha}, \alpha=1,2,3,4$, $\bm e_{\alpha}, \alpha=1,2,\cdots,6$, and $f_\alpha,\alpha=1,2,3,4$ are the 4 vertex, 6 edges, and 4 faces of a general element $K$. Suppose also $\bm m_\alpha$ is the midpoint of the edge $\bm e_\alpha$ and  $\bm e_{\alpha,j},j=1,2$ are the two tripartite points of the edge $\bm e_\alpha$. 

We define
\begin{align*}
&L^{v}_{\alpha,i}(\bm \phi)=(\nabla\times\bm \phi)_1(\bm x_{\alpha}),\ i=1,2,3,\ \alpha=1,2,3,4.\\	
&L^{o,v}_{\alpha,i}(\bm \phi)=\partial_o(\nabla\times\bm \phi)_1(\bm x_{\alpha}),\ i=1,2,3,\ \alpha=1,2,3,4, \ o\in \{x,y,z\} \text{ if } i=1,2,\text{ otherwise}, \\
&o\in \{x,y\}.\\
&L^{o,v}_{\alpha,i}(\bm \phi)=\partial_o^2(\nabla\times\bm \phi)_1(\bm x_{\alpha}), \ i=1,2,3, \ \alpha=1,2,3,4,\
    o\in 
    \begin{cases}
    	\{yy,zz,xz,xy,yz\}, \ i=1,\\
    	\{xx,zz,xz,xy,yz\},\ i=2,\\
    	\{xx,yy,xz,xy,yz\},\ i=3.
    \end{cases}\\
&L^{1,e}_{\alpha,i}(\bm \phi)=(\nabla\times\bm \phi)_1(\bm m_{\alpha}),\ i=1,2,3,\ \alpha=1,2,\cdots,6.\\	
&L^{\bm a_i,\bm n}_{\alpha,i,j}(\bm \phi)=(\nabla(\nabla\times\bm \phi\cdot\bm a_i)\cdot \bm n)(\bm e_{\alpha,j}),\ i=1,2,3,\ \alpha=1,2,\cdots,6, \ j=1,2,
\text{ and } \bm a_1=\bm \tau, \\
& \bm a_2=\bm n, \bm a_3=\bm m,\\
&\text{ where }  \bm n,\bm m \text{ and } \bm \tau \text{ are } 
\text{ two unit normal vectors and the unit tangent vector to } \bm e_{\alpha}. \\
 &L^{\bm a_i,\bm m}_{\alpha,i,j}(\bm \phi)=(\nabla(\nabla\times\bm \phi\cdot\bm a_i)\cdot \bm m)(\bm e_{\alpha,j}),\ i=1,2,\ \alpha=1,2,\cdots,6, \ j=1,2,\\
 &\text{where } \bm a_1=\bm \tau \text{ and } \bm a_2=\bm n.\\
&L^{1,f}_{\alpha,i}(\bm \phi)=\frac{1}{area(f_i)}\int_{f_i}\nabla\times\bm \phi\cdot\bm \tau_i	\mathrm{d}S,\ i=1,2,\ \alpha=1,2,3,4, \text{ and } \bm \tau_{i} \text{ is the unit tangent} \\
&\text{  vector on the face }  f_{\alpha}. \\
&L^{0,e}_{\alpha,i}(\bm \phi)=\int_{\bm e_{\alpha}}\bm \phi\cdot\bm \tau_{\alpha}q_i\mathrm{d}s,\ \alpha=1,2,\cdots,6,\ i=1,2,\cdots,7.
\\
&L^{0,f}_{\alpha,i}(\bm \phi)=\frac{1}{area(f_i)}\int_{ f_{\alpha}}\bm \phi\cdot\bm q_i\mathrm{d}S, \ \alpha=1,2,3,4,\ i=1,2,\cdots,15.\\
&L^{0,K}_{i}(\bm \phi)=\int_{K}\bm \phi\cdot\bm q_i\mathrm{d}V, \ i=1,2,\cdots,20.\\
&L^{1,K}_{i}(\bm \phi)=\int_{K}\nabla\times\bm \phi\cdot\bm q_i\mathrm{d}V,\  i=1,2,3.
 \end{align*}
 
We list these functionals as $L_j$ for $j = 1,2, \cdots,315$ in the following order:
\begin{align*}
&L^v_{1,i},i=1,2,3,\ L^v_{2,i},i=1,2,3,\ L^v_{3,i},i=1,2,3,\ L^v_{4,i},i=1,2,3, 
	\\ 
&L^{o}_{1,1} (\text{ the elements in } o \text{ are ordered as }  x,y,z), \\
&L^{o}_{1,2}(\text{ the elements in } o \text{ are ordered as }  x,y,z),\\
&L^{o}_{1,3} (\text{ the elements in } o \text{ are ordered as }  x,y),\\
&L^{o}_{2,1},L^{o}_{2,2},L^{o}_{2,3},L^{o}_{3,1},L^{o}_{3,2},L^{o}_{3,3},L^{o}_{4,1},L^{o}_{4,2},L^{o}_{4,3},\\
&L^{o}_{1,1} (\text{ the elements in } o \text{ are ordered as }  yy,zz,xz,xy,yz), \\
&L^{o}_{1,2}(\text{ the elements in } o \text{ are ordered as }  xx,zz,xz,xy,yz),\\
&L^{o}_{1,3}(\text{ the elements in } o \text{ are ordered as }  xx,yy,xz,xy,yz),\\
&L^{o}_{2,1},L^{o}_{2,2},L^{o}_{2,3},
L^{o}_{3,1},L^{o}_{3,2},L^{o}_{3,3},
L^{o}_{4,1},L^{o}_{4,2},L^{o}_{4,3},\\
&L^{1,e}_{1,i}, i=1,2,3, \ L^{1,e}_{2,i}, i=1,2,3,\ L^{1,e}_{3,i}, i=1,2,3,  \\
& L^{1,e}_{4,i}, i=1,2,3, \ L^{1,e}_{5,i}, i=1,2,3, \ L^{1,e}_{6,i}, i=1,2,3.\\
&L^{\bm \tau,\bm n}_{1,1},L^{\bm n,\bm n}_{1,1},L^{\bm m,\bm n}_{1,1},L^{\bm \tau,\bm m}_{1,1},L^{\bm n,\bm m}_{1,1},L^{\bm \tau,\bm n}_{1,2},L^{\bm n,\bm n}_{1,2},L^{\bm m,\bm n}_{1,2},L^{\bm \tau,\bm m}_{1,2},L^{\bm n,\bm m}_{1,2},\\
&L^{\bm \tau,\bm n}_{2,1},L^{\bm n,\bm n}_{2,1},L^{\bm m,\bm n}_{2,1},L^{\bm \tau,\bm m}_{2,1},L^{\bm n,\bm m}_{2,1},L^{\bm \tau,\bm n}_{2,2},L^{\bm n,\bm n}_{2,2},L^{\bm m,\bm n}_{2,2},L^{\bm \tau,\bm m}_{2,2},L^{\bm n,\bm m}_{2,2},\\
&L^{\bm \tau,\bm n}_{3,1},L^{\bm n,\bm n}_{3,1},L^{\bm m,\bm n}_{3,1},L^{\bm \tau,\bm m}_{3,1},L^{\bm n,\bm m}_{3,1},L^{\bm \tau,\bm n}_{3,2},L^{\bm n,\bm n}_{3,2},L^{\bm m,\bm n}_{3,2},L^{\bm \tau,\bm m}_{3,2},L^{\bm n,\bm m}_{3,2},\\
&L^{\bm \tau,\bm n}_{4,1},L^{\bm n,\bm n}_{4,1},L^{\bm m,\bm n}_{4,1},L^{\bm \tau,\bm m}_{4,1},L^{\bm n,\bm m}_{4,1},L^{\bm \tau,\bm n}_{4,2},L^{\bm n,\bm n}_{4,2},L^{\bm m,\bm n}_{4,2},L^{\bm \tau,\bm m}_{4,2},L^{\bm n,\bm m}_{4,2},\\
&L^{\bm \tau,\bm n}_{5,1},L^{\bm n,\bm n}_{5,1},L^{\bm m,\bm n}_{5,1},L^{\bm \tau,\bm m}_{5,1},L^{\bm n,\bm m}_{5,1},L^{\bm \tau,\bm n}_{5,2},L^{\bm n,\bm n}_{5,2},L^{\bm m,\bm n}_{5,2},L^{\bm \tau,\bm m}_{5,2},L^{\bm n,\bm m}_{5,2},\\
&L^{\bm \tau,\bm n}_{6,1},L^{\bm n,\bm n}_{6,1},L^{\bm m,\bm n}_{6,1},L^{\bm \tau,\bm m}_{6,1},L^{\bm n,\bm m}_{6,1},L^{\bm \tau,\bm n}_{6,2},L^{\bm n,\bm n}_{6,2},L^{\bm m,\bm n}_{6,2},L^{\bm \tau,\bm m}_{6,2},L^{\bm n,\bm m}_{6,2},\\
&L_{1,i}^{1,f},i=1,2,\ L_{2,i}^{1,f},i=1,2,
\ L_{3,i}^{1,f},i=1,2, \ L_{4,i}^{1,f},i=1,2,\\
& L^{0,e}_{1,i},i=1,2,\cdots,7,\ L^{0,e}_{2,i},i=1,2,\cdots,7,\ L^{0,e}_{3,i},i=1,2,\cdots,7,\\
& L^{0,e}_{4,i},i=1,2,\cdots,7, \ L^{0,e}_{5,i},i=1,2,\cdots,7, \ L^{0,e}_{6,i},i=1,2,\cdots,7,\\
& L^{0,f}_{1,i},i=1,2,\cdots,15, \ L^{0,f}_{2,i},i=1,2,\cdots,15, \ L^{0,f}_{3,i},i=1,2,\cdots,15, \ L^{0,f}_{4,i},i=1,2,\cdots,15,\\
& L_i^{0,K},i=1,2,\cdots,20, \ L_i^{1,K},i=1,2,3.
\end{align*}
The functionals $\hat L_i, i=1,2,\cdots,315$ are the counterparts on the reference element.
The basis functions $\{N_j\}_{j=1}^{315}$ for the finite element on a general element $K$ satisfy 
\[ L_i(N_j)=\delta_{ij}\ \ i,j\in{1,2,\cdots,315}.\]
The basis functions $\{\hat N_j\}_{j=1}^{315}$ for the finite element on the reference element $\hat K$ satisfy 
\[ \hat L_i(\hat N_j)=\delta_{ij}\ \ i,j\in{1,2,\cdots,315}.\]
Note that $N_j$ and $\hat N_j$ can not be related with mapping \eqref{mapping-u}.
We define 
     \[ \tilde L_i(\bm \phi)=\hat L_i(B_K^T\bm \phi\circ F).\]
 Since both sets ${L_i}$ and ${\tilde L_i}$ are bases of $\mathcal{R}_7^*$, the dual space to $\mathcal{R}_7$, there exists a matrix $C = (c_{ij})$ such that
\[\tilde L_i=\sum_{j=1}^{315}c_{ij}L_j\text{ in } \mathcal R_7^*.\]
By an elementary transposition argument, it follows that
\[N_i\circ F=\sum_{k=1}^{315}c_{ki}\hat N_k\text{ in } \mathcal R_7.\]
If we have obtained the basis functions on the reference element and the matrix C, we then obtain the basis functions on a general element.
We introduce a new set of functionals $L^*_i,i = 1,2,\cdots,383,$ which are listed in order as follows.
\begin{align*}
&L^v_{1,i},i=1,2,3,\ L^v_{2,i},i=1,2,3,\ L^v_{3,i},i=1,2,3,\ L^v_{4,i},i=1,2,3, 
	\\ 
&L^{o}_{1,1} (\text{ the elements in } o \text{ are ordered as }  x,y,z), \\
&L^{o}_{1,2}(\text{ the elements in } o \text{ are ordered as }  x,y,z),\\
&L^{o}_{1,3} (\text{ the elements in } o \text{ are ordered as }  x,y,z),\\
&L^{o}_{2,1},L^{o}_{2,2},L^{o}_{2,3},L^{o}_{3,1},L^{o}_{3,2},L^{o}_{3,3},L^{o}_{4,1},L^{o}_{4,2},L^{o}_{4,3},\\
&L^{o}_{1,1} (\text{ the elements in } o \text{ are ordered as }  xx,yy,zz,xz,xy,yz), \\
&L^{o}_{1,2}(\text{ the elements in } o \text{ are ordered as }  xx,yy,zz,xz,xy,yz),\\
&L^{o}_{1,3}(\text{ the elements in } o \text{ are ordered as }  xx,yy,zz,xz,xy,yz),\\
&L^{o}_{2,1},L^{o}_{2,2},L^{o}_{2,3},
L^{o}_{3,1},L^{o}_{3,2},L^{o}_{3,3},
L^{o}_{4,1},L^{o}_{4,2},L^{o}_{4,3},\\
&L^{1,e}_{1,i}, i=1,2,3, \ L^{1,e}_{2,i}, i=1,2,3,\ L^{1,e}_{3,i}, i=1,2,3,  \\
& L^{1,e}_{4,i}, i=1,2,3, \ L^{1,e}_{5,i}, i=1,2,3, \ L^{1,e}_{6,i}, i=1,2,3,\\
&L^{\bm \tau,\bm \tau}_{1,1},L^{\bm n,\bm \tau}_{1,1},L^{\bm m,\bm \tau}_{1,1},L^{\bm \tau,\bm n}_{1,1},L^{\bm n,\bm n}_{1,1},L^{\bm m,\bm n}_{1,1},L^{\bm \tau,\bm m}_{1,1},L^{\bm n,\bm m}_{1,1},L^{\bm m,\bm m}_{1,1},\\
&L^{\bm \tau,\bm \tau}_{1,2},L^{\bm n,\bm \tau}_{1,2},L^{\bm m,\bm \tau}_{1,2},L^{\bm \tau,\bm n}_{1,2},L^{\bm n,\bm n}_{1,2},L^{\bm m,\bm n}_{1,2},L^{\bm \tau,\bm m}_{1,2},L^{\bm n,\bm m}_{1,2},L^{\bm m,\bm m}_{1,2},\\
&L^{\bm \tau,\bm \tau}_{2,1},L^{\bm n,\bm \tau}_{2,1},L^{\bm m,\bm \tau}_{2,1},L^{\bm \tau,\bm n}_{2,1},L^{\bm n,\bm n}_{2,1},L^{\bm m,\bm n}_{2,1},L^{\bm \tau,\bm m}_{2,1},L^{\bm n,\bm m}_{2,1},L^{\bm m,\bm m}_{2,1}\\
&L^{\bm \tau,\bm \tau}_{2,2},L^{\bm n,\bm \tau}_{2,2},L^{\bm m,\bm \tau}_{2,2},L^{\bm \tau,\bm n}_{2,2},L^{\bm n,\bm n}_{2,2},L^{\bm m,\bm n}_{2,2},L^{\bm \tau,\bm m}_{2,2},L^{\bm n,\bm m}_{2,2},L^{\bm m,\bm m}_{2,2},\\
&L^{\bm \tau,\bm \tau}_{3,1},L^{\bm n,\bm \tau}_{3,1},L^{\bm m,\bm \tau}_{3,1},L^{\bm \tau,\bm n}_{3,1},L^{\bm n,\bm n}_{3,1},L^{\bm m,\bm n}_{3,1},L^{\bm \tau,\bm m}_{3,1},L^{\bm n,\bm m}_{3,1},L^{\bm m,\bm m}_{3,1}\\
&L^{\bm \tau,\bm \tau}_{3,2},L^{\bm n,\bm \tau}_{3,2},L^{\bm m,\bm \tau}_{3,2},L^{\bm \tau,\bm n}_{3,2},L^{\bm n,\bm n}_{3,2},L^{\bm m,\bm n}_{3,2},L^{\bm \tau,\bm m}_{3,2},L^{\bm n,\bm m}_{3,2},L^{\bm m,\bm m}_{3,2},\\
&L^{\bm \tau,\bm \tau}_{4,1},L^{\bm n,\bm \tau}_{4,1},L^{\bm m,\bm \tau}_{4,1},L^{\bm \tau,\bm n}_{4,1},L^{\bm n,\bm n}_{4,1},L^{\bm m,\bm n}_{4,1},L^{\bm \tau,\bm m}_{4,1},L^{\bm n,\bm m}_{4,1},L^{\bm m,\bm m}_{4,1}\\
&L^{\bm \tau,\bm \tau}_{4,2},L^{\bm n,\bm \tau}_{4,2},L^{\bm m,\bm \tau}_{4,2},L^{\bm \tau,\bm n}_{4,2},L^{\bm n,\bm n}_{4,2},L^{\bm m,\bm n}_{4,2},L^{\bm \tau,\bm m}_{4,2},L^{\bm n,\bm m}_{4,2},L^{\bm m,\bm m}_{4,2},\\
&L^{\bm \tau,\bm \tau}_{5,1},L^{\bm n,\bm \tau}_{5,1},L^{\bm m,\bm \tau}_{5,1},L^{\bm \tau,\bm n}_{5,1},L^{\bm n,\bm n}_{5,1},L^{\bm m,\bm n}_{5,1},L^{\bm \tau,\bm m}_{5,1},L^{\bm n,\bm m}_{5,1},L^{\bm m,\bm m}_{5,1}\\
&L^{\bm \tau,\bm \tau}_{5,2},L^{\bm n,\bm \tau}_{5,2},L^{\bm m,\bm \tau}_{5,2},L^{\bm \tau,\bm n}_{5,2},L^{\bm n,\bm n}_{5,2},L^{\bm m,\bm n}_{5,2},L^{\bm \tau,\bm m}_{5,2},L^{\bm n,\bm m}_{5,2},L^{\bm m,\bm m}_{5,2},\\
&L^{\bm \tau,\bm \tau}_{6,1},L^{\bm n,\bm \tau}_{6,1},L^{\bm m,\bm \tau}_{6,1},L^{\bm \tau,\bm n}_{6,1},L^{\bm n,\bm n}_{6,1},L^{\bm m,\bm n}_{6,1},L^{\bm \tau,\bm m}_{6,1},L^{\bm n,\bm m}_{6,1},L^{\bm m,\bm m}_{6,1}\\
&L^{\bm \tau,\bm \tau}_{6,2},L^{\bm n,\bm \tau}_{6,2},L^{\bm m,\bm \tau}_{6,2},L^{\bm \tau,\bm n}_{6,2},L^{\bm n,\bm n}_{6,2},L^{\bm m,\bm n}_{6,2},L^{\bm \tau,\bm m}_{6,2},L^{\bm n,\bm m}_{6,2},L^{\bm m,\bm m}_{6,2},\\
&L_{1,i}^{1,f},i=1,2,\ L_{1,\bm \nu}^{1,f},\ L_{2,i}^{1,f},i=1,2,\ L_{2,\bm \nu}^{1,f},
\ L_{3,i}^{1,f},i=1,2, \ L_{3,\bm \nu}^{1,f},\ L_{4,i}^{1,f},i=1,2,\ L_{4,\bm \nu}^{1,f},\\
& L^{0,e}_{1,i},i=1,2,\cdots,7,\ L^{0,e}_{2,i},i=1,2,\cdots,7,\ L^{0,e}_{3,i},i=1,2,\cdots,7,\\
& L^{0,e}_{4,i},i=1,2,\cdots,7, \ L^{0,e}_{5,i},i=1,2,\cdots,7, \ L^{0,e}_{6,i},i=1,2,\cdots,7,\\
& L^{0,f}_{1,i},i=1,2,\cdots,15, \ L^{0,f}_{2,i},i=1,2,\cdots,15, \ L^{0,f}_{3,i},i=1,2,\cdots,15, \ L^{0,f}_{4,i},i=1,2,\cdots,15,\\
& L_i^{0,K},i=1,2,\cdots,20, \ L_i^{1,K},i=1,2,3,
\end{align*}
where $L_{\alpha,\bm \nu}^{1,f}=\int_{f_{\alpha}}\nabla\times\bm \phi\cdot\bm \nu_{\alpha}\mathrm{d}S,\ \alpha=1,2,3,4, \text{ and } \bm \nu_{\alpha} \text{ is the unit normal vector } \text{of the face }  f_{\alpha}.$

If we can find two matrices $D=(d_{ij})$ and $E=(e_{ij})$ such that 
\begin{align*}
	\tilde L_i = \sum_{j=1}^{383} d_{ij} L_j^*, \text{ in }\mathcal R_7^*,\ i = 1,2,\cdots,315,\\
	L^*_i = \sum_{j=1}^{315} e_{ij} L_j, \text{ in }\mathcal R_7^*,\ i = 1,2,\cdots,383,
\end{align*}	
then $C=DE$. By the transformation \eqref{mapping-curlu} and the chain rule, we have
\begin{align*}
	\quad \quad \begin{pmatrix}
	\tilde L^{v}_{\alpha,1}&
	\tilde L^{v}_{\alpha,2}&
	\tilde L^{v}_{\alpha,3}
\end{pmatrix}^T=\det(B_K)B_K^{-1}
\begin{pmatrix}
	 L^{*,v}_{\alpha,1}&
	 L^{*,v}_{\alpha,2}&
	 L^{*,v}_{\alpha,3}
\end{pmatrix}^T,
\end{align*}
\begin{align*}
	&\quad \quad \quad \begin{pmatrix}
	\tilde L^{x}_{\alpha,1}&
	\tilde L^{y}_{\alpha,1}&
	\tilde L^{z}_{\alpha,1}&
	\tilde L^{x}_{\alpha,2}&
	\tilde L^{y}_{\alpha,2}&
	\tilde L^{z}_{\alpha,2}&
    \tilde L^{x}_{\alpha,3}&
	\tilde L^{y}_{\alpha,3}
\end{pmatrix}^T\\
&~~=W
\begin{pmatrix}
	\tilde L^{*,x}_{\alpha,1}&
	\tilde L^{*,y}_{\alpha,1}&
	\tilde L^{*,z}_{\alpha,1}&
	\tilde L^{*,x}_{\alpha,2}&
    \tilde L^{*,y}_{\alpha,2}&
	\tilde L^{*,z}_{\alpha,2}&
    \tilde L^{*,x}_{\alpha,3}&
	\tilde L^{*,y}_{\alpha,3}&
		\tilde L^{*,z}_{\alpha,3}
\end{pmatrix}^T,
\end{align*}
\begin{align*}
&\left(\!\!\begin{array}{ccccccccccccccc}
	\tilde L^{yy}_{\alpha,1}\!&
	\tilde L^{zz}_{\alpha,1}\!&
	\tilde L^{xz}_{\alpha,1}\!&
	\tilde L^{xy}_{\alpha,1}\!&
	\tilde L^{yz}_{\alpha,1}\!&
	\tilde L^{xx}_{\alpha,2}\!&
	\tilde L^{zz}_{\alpha,2}\!&
	\tilde L^{xz}_{\alpha,2}\!&
	\tilde L^{xy}_{\alpha,2}\!&
    \tilde L^{yz}_{\alpha,2}\!&
    \tilde L^{xx}_{\alpha,3}\!&
	\tilde L^{yy}_{\alpha,3}\!&
	\tilde L^{xz}_{\alpha,3}\!&
	\tilde L^{xy}_{\alpha,3}\!&
	\tilde L^{yz}_{\alpha,3}
\end{array}\!\!\right)^T\\
&=V\left(\!\!\begin{array}{ccccccccccccc}
 L^{*,xx}_{\alpha,1}\!&
 L^{*,yy}_{\alpha,1}\!&
 L^{*,zz}_{\alpha,1}\!&
 L^{*,xz}_{\alpha,1}\!&
 L^{*,xy}_{\alpha,1}\!&
 L^{*,yz}_{\alpha,1}\!&
 \cdots \! &
  L^{*,xx}_{\alpha,3}\!&
 L^{*,yy}_{\alpha,3}\!&
 L^{*,zz}_{\alpha,3}\!&
 L^{*,xz}_{\alpha,3}\!&
 L^{*,xy}_{\alpha,3}\!&
 L^{*,yz}_{\alpha,3}
\end{array}\!\!\right)^T\!,
\end{align*}
where $W$ is the first 8 rows of the matrix $\det(B_K)B_K^{-1}\bigotimes B_K^T$ and $V$ is the matrix $\det(B_K)B_K^{-1}\bigotimes H$ without first, eighth, fifteen rows with
\begin{align*}
	H=\begin{pmatrix}
		B_{11}^2&B_{21}^2&B_{31}^2&2B_{11}B_{31}&2B_{11}B_{21}&2B_{31}B_{21}\\
		B_{12}^2&B_{22}^2&B_{32}^2&2B_{12}B_{32}&2B_{12}B_{22}&2B_{32}B_{22}\\
		B_{13}^2&B_{23}^2&B_{33}^2&2B_{13}B_{33}&2B_{13}B_{23}&2B_{33}B_{23}\\
		B_{11}B_{13}&B_{21}B_{23}&B_{31}B_{33}&B_{11}B_{33}+B_{13}B_{31}&B_{11}B_{23}+B_{13}B_{21}&B_{31}B_{23}+B_{33}B_{21}\\
		B_{11}B_{12}&B_{21}B_{22}&B_{31}B_{32}&B_{11}B_{32}+B_{12}B_{31}&B_{11}B_{22}+B_{12}B_{21}&B_{31}B_{22}+B_{32}B_{21}\\
		B_{12}B_{13}&B_{22}B_{23}&B_{32}B_{33}&B_{32}B_{13}+B_{33}B_{12}&B_{13}B_{22}+B_{12}B_{23}&B_{32}B_{23}+B_{33}B_{22}\\
	\end{pmatrix}.
\end{align*}
Similarly,
\begin{align*}
	\begin{pmatrix}
	\tilde L^{1,e}_{\alpha,1}\\
	\tilde L^{1,e}_{\alpha,2}\\
	\tilde L^{1,e}_{\alpha,3}
\end{pmatrix}=\det(B_K)B_K^{-1}
\begin{pmatrix}
	 L^{*,1,e}_{\alpha,1}\\
	 L^{*,1,e}_{\alpha,2}\\
	 L^{*,1,e}_{\alpha,3}
\end{pmatrix},
\end{align*}
If we represent $B_K^{-T}\hat{\bm \tau}_{\alpha}$ and $B_K\hat{\bm n}_{\alpha}$ by using the bases $\bm \tau_{\alpha},\bm n_{\alpha},\bm m_{\alpha}$, we'll get
\begin{align*}
	&\quad \quad \quad \quad \quad \quad \quad \quad \begin{pmatrix}
	\tilde L^{\bm\tau,\bm n}_{\alpha,i}\!&
	\tilde L^{\bm n,\bm n}_{\alpha,i}\!&
	\tilde L^{\bm m,\bm n}_{\alpha,i}\!&
	\tilde L^{\bm\tau,\bm m}_{\alpha,i}\!&
	\tilde L^{\bm n,\bm m}_{\alpha,i}\!&
	\end{pmatrix}^T\\
	&=G_{\alpha}
		\begin{pmatrix}
	L^{*,\bm\tau,\bm \tau}_{\alpha,i}\!&
	L^{*,\bm n,\bm \tau}_{\alpha,i}\!&
	 L^{*,\bm m,\bm \tau}_{\alpha,i}\!&
	 	 L^{*,\bm\tau,\bm n}_{\alpha,i}\!&
	 L^{*,\bm n,\bm n}_{\alpha,i}\!&
	 L^{*,\bm m,\bm n}_{\alpha,i}\!&
	 L^{*,\bm\tau,\bm m}_{\alpha,i}\!&
	 L^{*,\bm n,\bm m}_{\alpha,i}\!&
	 L^{*,\bm m,\bm m}_{\alpha,i} \!&
\end{pmatrix} ^T
\end{align*}
with
\begin{align*}
G_{\alpha}=\det(B_K)
 		\begin{pmatrix}
		\big([\bm \tau_{\alpha},\bm n_{\alpha},\bm m_{\alpha}]B^{-T}_K\hat{\bm \tau}_{\alpha}\big)\bigotimes \big([\bm \tau_{\alpha},\bm n_{\alpha},\bm m_{\alpha}]B_K\hat{\bm n}_{\alpha}\big)\\
		\big([\bm \tau_{\alpha},\bm n_{\alpha},\bm m_{\alpha}]B^{-T}_K\hat{\bm n}_{\alpha}\big)\bigotimes \big([\bm \tau_{\alpha},\bm n_{\alpha},\bm m_{\alpha}]B_K\hat{\bm n}_{\alpha}\big)\\
		\big([\bm \tau_{\alpha},\bm n_{\alpha},\bm m_{\alpha}]B^{-T}_K\hat{\bm m}_{\alpha}\big)\bigotimes \big([\bm \tau_{\alpha},\bm n_{\alpha},\bm m_{\alpha}]B_K\hat{\bm n}_{\alpha}\big)\\
		\big([\bm \tau_{\alpha},\bm n_{\alpha},\bm m_{\alpha}]B^{-T}_K\hat{\bm \tau}_{\alpha}\big)\bigotimes \big([\bm \tau_{\alpha},\bm n_{\alpha},\bm m_{\alpha}]B_K\hat{\bm m}_{\alpha}\big)\\
		\big([\bm \tau_{\alpha},\bm n_{\alpha},\bm m_{\alpha}]B^{-T}_K\hat{\bm n}_{\alpha}\big)\bigotimes \big([\bm \tau_{\alpha},\bm n_{\alpha},\bm m_{\alpha}]B_K\hat{\bm m}_{\alpha}\big)\\
	\end{pmatrix}^T
 \end{align*}
 Similarly,
\begin{align*}
	\begin{pmatrix}
	\tilde L^{1,f}_{\alpha,1}\\
	\tilde L^{1,f}_{\alpha,2}\\
	\end{pmatrix}=
	C_{\alpha}
	\begin{pmatrix}
	L^{*,1,f}_{\alpha,1}\\
	L^{*,1,f}_{\alpha,2}\\
	 L^{*,1,f}_{\alpha,\bm n}\\
	\end{pmatrix} \text{ with } C_{\alpha}=\det(B_K)
	\begin{pmatrix}
		\big([\bm \tau^{\alpha}_1,\bm \tau^{\alpha}_2,\bm n_{\alpha}]^{-1}B^{-T}_K\hat{\bm \tau}^{\alpha}_1\big)^T\\
		\big([\bm \tau^{\alpha}_1,\bm \tau^{\alpha}_2,\bm n_{\alpha}]^{-1}B^{-T}_K\hat{\bm \tau}^{\alpha}_2\big)^T\\
			\end{pmatrix}
			\end{align*}
Since $L^{0,e}_{\alpha,i}=\hat L^{0,e}_{\alpha,i}$, $L^{0,f}_{\alpha,i}=\hat L^{0,f}_{\alpha,i}$,$L^{1,K}_{\alpha,i}=\hat L^{1,K}_{\alpha,i}$, and $L^{0,K}_{\alpha,i}=\hat L^{0,K}_{\alpha,i}$, we have
\begin{align*}
	&\tilde L^{0,e}_{\alpha,i}=L^{*,0,e}_{\alpha,i},\
	\tilde L^{0,f}_{\alpha,i}=L^{*,0,f}_{\alpha,i},\\
	&\tilde L^{0,K}_{\alpha,i}=L^{*,0,K}_{\alpha,i},
	\tilde L^{1,K}_{\alpha,i}=L^{*,1,K}_{\alpha,i}.
\end{align*}
	
Now we are in the position to giving the explicit expression of D,
\begin{align*}
	D=& \text{diag}(\det(B_K)B_K^{-1},\det(B_K)B_K^{-1},\det(B_K)B_K^{-1},\det(B_K)B_K^{-1},W,W,W,W,V,V,V,V,\\
&\det(B_K)B_K^{-1},\det(B_K)B_K^{-1},\det(B_K)B_K^{-1},\det(B_K)B_K^{-1},\det(B_K)B_K^{-1},\det(B_K)B_K^{-1},\\
&G_1,G_1,G_2,G_2,G_3,G_3,G_4,G_4,G_5,G_5,G_6,G_6,C_1,C_2,C_3,C_4,I_{125\times125}).
\end{align*}

Next, we express $L^*$ by  $L$ and some other functionals which actually can also be represented by $L$.

It's trivial to represent $L^{*,z}_{\alpha,i},L^{*,xx}_{\alpha,1},L^{*,yy}_{\alpha,2},L^{*,zz}_{\alpha,3}$. Now we show how to represent $L^{*,\bm\tau,\bm\tau}$. Let $\varphi(t)=(\nabla\times\bm\phi\cdot\bm \tau)(t\bm x_{\beta}+(1-t)\bm x_{\alpha})\in P_6(t).$
Then 
\begin{align*}
	&L^{*,\bm\tau,\bm\tau}_{\gamma,1}\bm \phi=\varphi\prime(\frac{1}{3})
	=-\frac{248}{81}\varphi(0)-\frac{8}{81}\varphi(1)+\frac{256}{81}\varphi(1/2)-\frac{40}{81}\varphi\prime(0)+\frac{1}{81}\varphi\prime(1)-\frac{2}{81}\varphi\prime\prime(0),\\
	&L^{*,\bm\tau,\bm\tau}_{\gamma,2}\bm \phi=\varphi\prime(\frac{2}{3})
	=\frac{8}{81}\varphi(0)+\frac{248}{81}\varphi(1)-\frac{256}{81}\varphi(1/2)+\frac{1}{81}\varphi\prime(0)-\frac{40}{81}\varphi\prime(1)+\frac{2}{81}\varphi\prime\prime(1).
\end{align*}
If we represent $\varphi(0),\varphi(1),\varphi'(0),\varphi'(1),\varphi(1/2),\varphi''(0)$, and $\varphi''(1)$ by some $L_i$, we can obtain $L^{*,\bm\tau,\bm\tau}_{\gamma,i}$ in the form of a linear combination of $L_i,i=1,2,\cdots,315$.
Similarly, we can represent $L^{*,\bm n,\bm \tau}$ and $L^{*,\bm m,\bm \tau}$. As for $L^{*,\bm m,\bm m}$, we can find 8 constants such that
\begin{align*}
\nabla(\nabla\times\bm \phi\cdot\bm m)\cdot\bm m=& C_1\nabla(\nabla\times\bm \phi\cdot\bm \tau)\cdot\bm \tau+C_2\nabla(\nabla\times\bm \phi\cdot\bm \tau)\cdot\bm n+C_3\nabla(\nabla\times\bm \phi\cdot\bm \tau)\cdot\bm m \\
&+C_4\nabla(\nabla\times\bm \phi\cdot\bm n)\cdot\bm \tau+C_5\nabla(\nabla\times\bm \phi\cdot\bm n)\cdot\bm n+C_6\nabla(\nabla\times\bm \phi\cdot\bm n)\cdot\bm m \\
&+C_7
\nabla(\nabla\times\bm \phi\cdot\bm m)\cdot\bm \tau+
C_8\nabla(\nabla\times\bm \phi\cdot\bm m)\cdot\bm n,
\end{align*}
since $\nabla\cdot(\nabla\times\bm \phi)=0$.
Furthermore, $\nabla(\nabla\times\bm \phi\cdot\bm \tau)\cdot\bm \tau$, $\nabla(\nabla\times\bm \phi\cdot\bm n)\cdot\bm \tau$ and $\nabla(\nabla\times\bm \phi\cdot\bm \tau)\cdot\bm m$ can be determined by the values of $\nabla\times\bm\phi$ and its up to second derivatives at two endpoints since they are in $P_6$. So far, we can represent $L^{*,\bm m,\bm m}$ in terms of $L$.

Since $(\nabla\times\bm \phi\cdot\bm n,1)_f=\langle\bm \phi\cdot\bm \tau,1\rangle_{\partial f},$
we can represent $L^{*,1,f}$ as a linear combination of $L^{0,e}$, and hence $L_i$.

Finally, we can express $L^*$ by $L$. 
Then we obtain $C=DE$. Because of the large number of degrees of freedom, it's tedious to implement this process in Matlab. We provide the code for basis functions at \\
 \url{https://github.com/QianZhangMath/3D-curl-curl-conforming-FE}.

\section*{Acknowledgments}
We would like to thank Professor Jiguang Sun for drawing our attention to the quad-curl problem. We would also like to thank Professor Jiguang Sun and Professor Huiyuan Li for their valuable comments and suggestions.

\bibliographystyle{plain}
\bibliography{quadcurl-3d}{}
~\\
\end{document}